\begin{document}

\title{One-point concentration of the clique and chromatic numbers of the random Cayley graph on $\mathbb{F}_2^n$}

\author{\myname}
\address{\myaddress}
\email{\myemail}

\begin{abstract}
Green \cite{green} showed that there exist constants $C_1,C_2>0$ such that the clique number $\omega$ of the random Cayley graph on $\mathbb{F}_2^n$ satisfies $\lim_{n\to\infty}\mathbb{P}(C_1n\log n < \omega < C_2n\log n)=1$. In this paper we find the best possible $C_1$ and $C_2$. Moreover, we prove that for $n$ in a set of density $1$, clique number is actually concentrated on a single value. As a simple consequence of these results, we also prove the one-point concentration result for the chromatic number, thus proving the $\mathbb{F}_2^n$ analogue of the famous conjecture by Bollob\'{a}s \cite{bollobasconjecture} and giving almost the complete answer to the question by Green \cite{greenchromatic}.
\end{abstract}

\maketitle


\section{Introduction}

Clique number of a graph is the size of its largest complete subgraph. It is a classical result of the theory of random graphs that the clique number in the Erd{\H{o}}s-R\'{e}nyi model $G(N,\frac12)$ is strongly concentrated around $2\log N$ (all the logarithms appearing in this paper will be with base $2$). Even more so, Bollob{\'a}s and Erd{\H{o}}s \cite{bollobaserdos} and Matula \cite{matula} independently proved that the clique number is with high probability equal to one of the two values, and that for most $N$ it is actually concentrated on a single value. 

There is also a standard more algebraic way of constructing a random graph. Given an abelian group $G$, one constructs a random graph with vertex set $G$ by taking a random subset $A\subset G$ so that each each element lies in $A$ with probability $\frac12$, independently of other elements, and joining vertices $x\neq y$ if and only if $x+y\in A$. This graph is called \emph{random Cayley sum graph}. (There is also a related notion of \emph{random Cayley graph} where one takes $A$ to be a random \emph{symmetric} set and joins $x\neq y$ if and only if $x-y\in A$. In $\mathbb{F}_2^n$ these are the same, and for other groups most of the results true for one concept are also true for the other.) In \cite{green}, Green studied the clique number of the random Cayley sum graph on the groups $\mathbb{Z}/N\mathbb{Z}$ and $\mathbb{F}_2^n$. He proved that with high probability clique number is less than $160\log N$ in the $\mathbb{Z}/N\mathbb{Z}$ case and between $\frac14 n\log n$ and $308 n\log n$ in the $\mathbb{F}_2^n$ case. Green and Morris \cite{greenmorris} improved the 
former result by proving that for any $\epsilon>0$ the clique number is with high probability at most $(2+\epsilon)\log N$. The corresponding lower bound of $(2-\epsilon)\log N$ is much easier to prove, see \cite{greenchromatic}.

In this paper, we improve Green's result on $\mathbb{F}_2^n$. Let $\omega_n$ denote the corresponding clique number (we will omit subscript when it is clear from the context). The exact improvement is contained in the following theorem.

\begin{theorem}\label{T:main}
For every $\epsilon>0$,
$$\lim_{n\to\infty}\mathbb{P}(\textstyle{\frac12} n\log n < \omega_n <(1+\epsilon)n\log n)= 1.$$
\end{theorem}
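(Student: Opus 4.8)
The plan is to prove the two bounds separately, by the second‑moment method for the lower bound and a first‑moment argument refined by additive structure for the upper bound, exploiting the basic dictionary: $S\subseteq\mathbb{F}_2^n$ is a clique exactly when its difference set $D(S)=\{x+y:x\ne y\in S\}$ lies in $A$, and since the $|D(S)|=|S+S|-1$ elements of $D(S)$ are in $A$ independently, $\mathbb{P}(S\text{ is a clique})=2^{-|D(S)|}$. The extremal configurations are affine subspaces: a $d$‑dimensional affine subspace $V$ has $|D(V)|=2^d-1$, minimal among sets of its size, so $\mathbb{P}(D(V)\subseteq A)=2^{-(2^d-1)}$, while the number of affine $d$‑subspaces is $N_d=\binom{n}{d}_2 2^{n-d}=2^{(d+1)(n-d)+O(1)}$. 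Thus $N_d2^{-(2^d-1)}$ is large roughly when $2^d\lesssim(d+1)(n-d)\sim n\log n$, i.e.\ when $d\lesssim\log n+\log\log n$, producing cliques of size $2^d$ on the scale $n\log n$; the constraint that $2^d$ is a power of two is precisely what creates the gap between $\tfrac12 n\log n$ and $n\log n$.

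For the lower bound I would let $d=d(n)$ be the largest integer with $2^d<d(n-d)-\omega(1)$ and run the second moment method on $X_d=\#\{\text{affine }d\text{-subspaces }V:D(V)\subseteq A\}$. The first moment $\mathbb{E}[X_d]=N_d2^{-(2^d-1)}\to\infty$ is immediate from $(d+1)(n-d)-2^d\to\infty$ (the weaker requirement). For the variance, $\mathrm{Cov}(\mathbf{1}_{D(V)\subseteq A},\mathbf{1}_{D(V')\subseteq A})=2^{-2(2^d-1)}\bigl(2^{|D(V)\cap D(V')|}-1\bigr)$ and $D(V)\cap D(V')=(V_0\cap V'_0)\setminus\{0\}$ depends only on the intersection of the linear parts; grouping pairs by $j=\dim(V_0\cap V'_0)$, the dominant contribution comes from $j=d$ (parallel subspaces, for which $D(V)=D(V')$), and $\mathrm{Var}(X_d)=o(\mathbb{E}[X_d]^2)$ reduces exactly to $d(n-d)-2^d\to\infty$. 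Finally one checks that this maximal $d$ satisfies $2^d>\tfrac12 n\log n$: the threshold sits at $2^d\approx d(n-d)\approx n\log n$ with $d\approx\log n+\log\log n$, and because $d$ exceeds $\log n$ by an additive $\log\log n$, even the largest power of two below the threshold (the worst case, when the threshold falls just under a power of two) still exceeds $\tfrac12 n\log n$. The variance estimate is the only genuine computation here.

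For the upper bound I would first kill large subspace cliques: if $d\ge\lceil\log((1+\epsilon)n\log n)\rceil$ then $2^d\ge(1+\epsilon)n\log n$ overshoots $(d+1)(n-d)\sim n\log n$ by $\sim\epsilon n\log n$, which dwarfs the $n\log\log n$ and $O(n)$ corrections, so $\mathbb{E}[X_d]\to0$; a union bound over these $d$ shows whp no affine subspace clique has size $\ge(1+\epsilon)n\log n$. It then remains to show that \emph{every} clique is in essence a subspace clique. Given a clique $S$ with affine span $W$ of dimension $\ell$: if $|S|>\tfrac12|W|$ then $S+S=W-W$, so $D(S)=D(W)$ and $W$ itself is a subspace clique of dimension $\lceil\log|S|\rceil$, already handled. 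Otherwise $|S|\le\tfrac12|W|$, and I would split on the doubling $K=|S+S|/|S|$: when $K$ is large — quantitatively when $|D(S)|\gtrsim\ell\,|S|$ — the factor $2^{-|D(S)|}$ beats the crude count $N_\ell\binom{2^\ell}{|S|}$ of candidates with $\ell$‑dimensional span and a direct union bound closes the case; when $K$ is bounded, an inverse sumset theorem in $\mathbb{F}_2^n$ (Freiman/Green--Ruzsa type, ideally the sharp small‑doubling version) places $S$ inside an affine subspace $W'$ of dimension at most $\log|S|+O_K(1)$, whereupon one re‑runs the dichotomy inside the now very low‑dimensional ambient $W'$ and is again driven back to a rare subspace clique of dimension $\log|S|+O(1)>\log(n\log n)$, a contradiction.

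The hard part will be making this dichotomy quantitatively airtight, i.e.\ the intermediate‑doubling regime. Here the naive first moment genuinely fails — there are many such "bad'' cliques in expectation, because a clique of size $k$ with span slightly above $\log k$ can be a small fraction of its span at modest cost $2^{-|D(S)|}$ — so one must instead show these cliques are \emph{clustered}: each is contained in (or closely tied to) a subspace clique of dimension only $O(1)$ above $\log|S|$, and such subspace cliques are rare. Pulling this off requires inverse sumset bounds whose growth in the span dimension is at most linear (not polynomial, let alone exponential) in the doubling constant, and reconciling that additive‑combinatorial input with the combinatorial counts — not the second‑moment calculation — is where essentially all of the difficulty lies, and is presumably also what pins down the optimal constant $C_2=1$ alluded to in the abstract.
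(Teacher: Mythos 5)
Your lower bound (second moment on affine subspace cliques) and your treatment of the large-doubling regime (first moment against a Fre\u{\i}man-type count) both match what the paper does, and the threshold analysis around $d\approx\log n+\log\log n$ correctly explains why the constants sit at $\tfrac12$ and $1$. But the upper bound in the small-doubling, not-quite-full regime has a genuine gap, and your proposed fix is not sound.

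You correctly reduce to the case where a clique $S$ of size $k$ lies in a subspace $W'$ of dimension $\log k+O(1)$ with, say, $\tfrac12|W'|\geq |S|\geq c|W'|$ for some constant $c$, and you observe that the first moment fails here. Your suggested remedy is that ``each is contained in (or closely tied to) a subspace clique of dimension only $O(1)$ above $\log|S|$, and such subspace cliques are rare.'' This is the step that breaks: the fact that $S\subset W'$ and $S$ is a clique does \emph{not} make $W'$ a clique. When $|S|\leq\tfrac12|W'|$ you do not get $S+S=W'$, so $D(W')\subset A$ is not forced, and ``re-running the dichotomy inside $W'$'' makes no progress because you stay stuck at the same ratio $|S|/|W'|$. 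There is no iteration that drives you to a genuine subspace clique.

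The paper closes this gap with a different mechanism (Theorem~\ref{T:familyF}): rather than trying to embed $S$ into a rare clique, it constructs a small family $\mathcal{F}$ of sets of size at least $(2-\epsilon)k'$ (with $k'$ the power of $2$ just below $k$) such that \emph{every} $k$-element $X\subset W'$ has $X+X\supset F$ for some $F\in\mathcal{F}$. Then $\{X\widehat{+}X\subset A\}\subset\bigcup_{F\in\mathcal{F}}\{F\subset A\}$, and since $|\mathcal{F}|\leq 2^{\epsilon|W'|}$ while $\mathbb{P}(F\subset A)\leq 2^{-(2-\epsilon)k'}$, the union bound over $\mathcal{F}$ (not over the vastly larger collection of $X$'s) wins. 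Building $\mathcal{F}$ is where the additive combinatorics enters: the arithmetic regularity lemma in $\mathbb{F}_2^n$ (Green's version) plus Kneser's theorem show that either $X$ is spread over enough cosets of the regular subspace that $X+X$ already has size $\geq(2-\epsilon)k'$ and is a union of almost-cosets, or $X$ is nearly all of a union of cosets, and in that case $X+X$ again swallows a nearly-full union of cosets. This ``domination of the sumset'' step is the heart of the proof and is what your sketch is missing; the inverse sumset theorems you invoke put $X$ in a small ambient subspace but give no handle on the internal structure of $X+X$ once $|X|$ drops below half the ambient.

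So: your framing and your identification of the bottleneck are accurate, but the central new idea of the paper — replacing ``$S$ lies in a rare structure'' by ``$S+S$ contains one of few large structures,'' made effective via regularity plus Kneser — is not present in your proposal, and the specific substitute you offer does not work.
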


Additionally, we also prove that the constants in front of $n\log n$ in the previous theorem are optimal, i.e.\ we prove the following theorem.

\begin{theorem}\label{T:optimalmain}
The constants $\frac12$ and $1+\epsilon$ in Theorem \ref{T:main} are optimal, i.e.\ there exist sequences $(n_i)$ and $(n_j)$ such that
$$\lim_{i\to\infty}\mathbb{P}(\textstyle{\frac12} n_i\log n_i< \omega_{n_i} < (\textstyle{\frac12} +\epsilon) n_i\log n_i)= 1$$
and
$$\lim_{j\to\infty}\mathbb{P}(n_j\log n_j < \omega_{n_j} < (1+\epsilon)n_j\log n_j)= 1.$$
\end{theorem}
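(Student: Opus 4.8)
For the final statement (Theorem~\ref{T:optimalmain}):

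We obtain Theorem~\ref{T:optimalmain} from Theorem~\ref{T:main} by producing two sequences along which the affine-subspace construction almost reaches the two ends of the window of Theorem~\ref{T:main}. Write $L(n):=\log n+\log\log n=\log(n\log n)$, and for a dimension $d$ let $E_d$ denote the expected number of cosets of $d$-dimensional subspaces of $\mathbb{F}_2^n$ that form cliques, so that $\log_2 E_d=(d+1)(n-d)-2^d+O(1)$. A short calculation shows $E_{\lfloor L(n)\rfloor}\to\infty$ for every $n$ and $E_{\lfloor L(n)\rfloor+2}\to 0$, so the largest $d$ with $E_d\to\infty$, call it $d^\ast(n)$, is $\lfloor L(n)\rfloor$ or $\lfloor L(n)\rfloor+1$; one has $\tfrac12 n\log n\le 2^{d^\ast(n)}\le(1+o(1))n\log n$, and the upper value of $d^\ast(n)$ occurs (roughly) when $1-\{L(n)\}\lesssim(\log\log n)/\log n$. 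The only arithmetic we use is that, since $L(n)-L(n-1)\to 0$ faster than $(\log\log n)/\log n$, the fractional parts $\{L(n)\}$ move in steps $o((\log\log n)/\log n)$ and hence are dense in $[0,1)$; in particular we can prescribe $\{L(n)\}$ up to tiny error along arbitrarily large $n$.

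For the sequence $(n_j)$ witnessing the right-hand end, choose $(n_j)$ with $L(n_j)\notin\mathbb{Z}$ and $1-\{L(n_j)\}$ small enough that, for $D_j:=\lceil L(n_j)\rceil=\lfloor L(n_j)\rfloor+1$, one has $E_{D_j}\to\infty$ (the displayed formula shows it suffices that $1-\{L(n_j)\}\lesssim(\log\log n_j)/\log n_j$), while $n_j\log n_j<2^{D_j}=(1+o(1))n_j\log n_j$. A routine second moment argument for the number of clique cosets of dimension $D_j$ (as in \cite{green}) then shows that $\mathbb{F}_2^{n_j}$ contains such a coset with high probability, whence $\omega_{n_j}\ge 2^{D_j}>n_j\log n_j$ with high probability; together with the upper bound of Theorem~\ref{T:main} (and with $2^{D_j}<(1+\epsilon)n_j\log n_j$ for large $j$) this gives the second displayed statement.

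For the sequence $(n_i)$ witnessing the left-hand end, choose $(n_i)$ with $\tfrac{2\log\log n_i}{\log n_i}<1-\{L(n_i)\}=:\eta_i=o(1)$. Then $d^\ast(n_i)=\lfloor L(n_i)\rfloor$ and $2^{d^\ast(n_i)}=(\tfrac12+o(1))n_i\log n_i$, so by the lower bound of Theorem~\ref{T:main} it suffices to prove $\omega_{n_i}<(\tfrac12+\epsilon)n_i\log n_i$ with high probability; we may take $\epsilon<\tfrac12$. We use the elementary inequality $|S+S|\ge 2^{\lceil\log|S|\rceil}$, valid for all $S\subseteq\mathbb{F}_2^n$: either $|S+S|=|S-S|$ is already this large, or else there is a linear surjection $\mathbb{F}_2^n\to\mathbb{F}_2^{\lceil\log|S|\rceil}$ injective on $S$, and its image, having more than half the points, has full sumset by pigeonhole. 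Putting $m:=\lceil(\tfrac12+\epsilon)n_i\log n_i\rceil$, since $1-\{L(n_i)\}\to0$ and $0<\log(1+2\epsilon)<1$ we get $\lceil\log m\rceil=d^\ast(n_i)+1$; hence every clique $S$ with $|S|=m$ has $S+S\subseteq V:=\langle S-S\rangle$ with $\dim V\ge d^\ast+1$ and forces at least $2^{d^\ast+1}-1$ elements into $A$. If $\dim V=d^\ast+1$ then $|S|=m>2^{d^\ast}=\tfrac12|V|$ and pigeonhole forces $S+S=V$, i.e.\ $V\setminus\{0\}\subseteq A$; the probability that some $(d^\ast+1)$-dimensional subspace has this property is at most $\binom{n_i}{d^\ast+1}_2\,2^{\,1-2^{d^\ast+1}}=2^{\,n_i\log\log n_i-(\ln 2)\eta_i n_i\log n_i+o(\eta_i n_i\log n_i)+O(\log^2 n_i)}\to 0$, exactly because $\eta_i>2\log\log n_i/\log n_i$.

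It remains to rule out cliques $S$ of size $m$ whose span $V$ has dimension $\ge d^\ast+2$ — the ``spread-out'' cliques — and this is the crux, to be handled as in the proof of the upper bound of Theorem~\ref{T:main}: a naive union bound is hopeless (an $m$-clique inside an $r$-dimensional coset is overcounted by the enormous factor $\binom{2^r}{m}2^{r(n-r)}$), and one needs the structural input that a clique far from an affine subspace has a correspondingly larger sumset, forcing enough extra elements into $A$ that such cliques remain too improbable even after summing over all $V$. The point is that in that analysis only the ``$\dim V=\lceil\log m\rceil$'' term depends on the fine arithmetic of $n$, and at $m=(\tfrac12+\epsilon)n_i\log n_i$ it is negligible by the computation just made, whereas the spread-out contributions are already negligible for every $m\ge(\tfrac12+\epsilon)n\log n$. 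Thus $\mathbb{P}(\omega_{n_i}\ge(\tfrac12+\epsilon)n_i\log n_i)\to0$, which with Theorem~\ref{T:main} gives the first displayed statement. (Equivalently, both statements follow at once from the one-point concentration of $\omega_n$ near $2^{d^\ast(n)}$ for almost all $n$, together with the fact that $2^{d^\ast(n)}/(n\log n)$ has $\liminf$ equal to $\tfrac12$ and $\limsup$ equal to $1$, each attained along a subsequence of such $n$.) I expect the spread-out case to be the main difficulty: it is where the elementary bound $|S+S|\ge 2^{\lceil\log|S|\rceil}$ must be upgraded to a robust Freiman-type/container-type statement, and where most of the work of the preceding sections is needed.
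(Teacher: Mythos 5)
Your treatment of the right-end sequence $(n_j)$ matches the paper's: both choose $n_j$ so that $\eta_j:=1-\{\log n_j+\log\log n_j\}$ is $O(\log\log n_j/\log n_j)$ and then rerun the second-moment computation of Section~\ref{S:lower} with $m$ increased by one. This part is fine.

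The left-end sequence $(n_i)$ has a genuine gap. The paper obtains optimality of $\tfrac12$ by \emph{re-running the containers bound} of Section~\ref{S:upper} with $n_i$ chosen so that $\eta_i:=1-\{\log n_i+\log\log n_i\}$ is a \emph{constant} $\approx\log(1+\epsilon)$; equivalently, $k'=2^{\lfloor\log k\rfloor}\approx\tfrac{1+\epsilon}{1+2\epsilon}\,k$, which makes the term $2^{-(2-\delta)k'}$ beat both the subspace count $N^{\log O(k)}$ and the containers count $2^{\delta O(k)}$. You instead take $\eta_i=o(1)$, only bounded below by $2\log\log n_i/\log n_i$. That suffices for your Case~1 (span exactly $d^\ast+1$, where the whole punctured subspace is forced into $A$ and the union bound costs only $N^{d^\ast+1}$), but it does \emph{not} suffice for the spread-out case. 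In the containers bound the exponent is of the form
$$n\log n\Bigl(1+\tfrac{\log\log n}{\log n}+\delta\,O(1)-(1-\tfrac{\delta}{2})\,2^{\eta_i}\Bigr),$$
where $\delta>0$ is a \emph{fixed} constant (tied to $\epsilon$ through the family size in Theorem~\ref{T:familyF}). When $\eta_i\to0$ the gain $-(\ln 2)\eta_i\,n\log n$ is eventually swamped by $\delta\,O(n\log n)$, so the exponent becomes positive. Your closing claim that ``the spread-out contributions are already negligible for every $m\ge(\tfrac12+\epsilon)n\log n$'' is false: the containers argument for $k=(\tfrac12+\epsilon)n\log n$ only closes when $\eta_i$ is bounded away from $0$ (this is also exactly the hypothesis $\{\log n+\log\log n\}<1-\epsilon$ in Proposition~\ref{P:onepoint}). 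Your instinct that the spread-out case is ``the crux'' is correct, but the fix is not to run the existing machinery with your $\eta_i=o(1)$; it is to pick $\eta_i$ constant, as the paper does via $n_i=2^{\lfloor 2^i/(1+\epsilon)\rfloor}$, after which the one computation in~\eqref{E:smallfinal} already handles all spans $\ge d^\ast+1$ simultaneously, with no need for your pigeonhole lower bound $|S+S|\ge 2^{\lceil\log|S|\rceil}$ or the span-based case split.
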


Moreover, we prove that for most $n$ the clique number is concentrated on a single value. This is the $\mathbb{F}_2^n$ analogue of the celebrated two-point concentration result in the Erd{\H{o}}s-R\'{e}nyi model stated above. We note that this is the first result of that type for random Cayley sum graphs.

The word \emph{most} above is justified by the following corollary. For arbitrary $A\subset\mathbb{N}$ we define its density as $d(A)=\lim_{n\to\infty}|A\cap \{1,\dots,n\}|/n$ and its lower density by $\underline{d}(A)=\liminf_{n\to\infty}|A\cap \{1,\dots,n\}|/n$.

\begin{theorem}\label{T:onepoint}
There exists a sequence $(n_l)$ of density $1$ such that
$$\lim_{l\to\infty}\mathbb{P}(\omega_{n_l} = 2^{\lfloor \log n_l+\log\log n_l\rfloor})= 1.$$
\end{theorem}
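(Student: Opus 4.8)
The plan is to prove Theorem~\ref{T:onepoint} by establishing two one-sided estimates. Write $d_n=\lfloor\log n+\log\log n\rfloor=\lfloor\log(n\log n)\rfloor$, so $2^{d_n}$ is the largest power of $2$ not exceeding $n\log n$, and set $\theta_n=\{\log(n\log n)\}\in[0,1)$, so $2^{d_n}=2^{-\theta_n}\,n\log n$. I will show that (i) for \emph{every} $n$, with high probability $\omega_n\ge 2^{d_n}$; and (ii) for every $n$ in a set $G\subseteq\mathbb{N}$ of density $1$, with high probability $\omega_n\le 2^{d_n}$. Taking $(n_l)$ to be the increasing enumeration of $G$ then finishes the proof. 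The set $G$ will be $\{n:\theta_n<1-2\log\log n/\log n\}$: since $\log(n\log n)$ differs from $\log n$ only by the slowly varying term $\log\log n$, the classical fact that $\{\log m\}$ obeys a Benford-type law (the proportion of $m\le N$ with $\{\log m\}\in[a,b)$ tends to $2^b-2^a$) forces $|G^{c}\cap[1,N]|/N\to 0$, i.e.\ $d(G)=1$.

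\emph{Lower bound (i).} Let $X$ count the $d_n$-dimensional subspaces $W\le\mathbb{F}_2^n$ with $W\setminus\{0\}\subseteq A$; each such $W$ is a clique of size $2^{d_n}$, so it suffices to show $\mathbb{P}(X=0)\to 0$. One has $\mathbb{E} X=\binom{n}{d_n}_{2}\,2^{-(2^{d_n}-1)}$, and since $\log\binom{n}{d_n}_{2}=d_n(n-d_n)+O(d_n)$ while $2^{d_n}\le n\log n$ and $d_n\ge\log n+\log\log n-1$, this gives $\log\mathbb{E} X\ge n\log\log n-O(n)\to\infty$. For the variance, $\mathrm{Var}(X)=2^{-(2^{d_n+1}-2)}\sum_{W,W'}\bigl(2^{2^{j}-1}-1\bigr)$ with $j=\dim(W\cap W')$; here the pairs with $j=0$ contribute nothing, the diagonal ($j=d_n$) contributes $O(\mathbb{E} X)$, and a routine estimate for the number of $W'$ with $\dim(W\cap W')=j$ bounds the remaining terms by $o((\mathbb{E} X)^2)$. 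Chebyshev then yields $\mathbb{P}(X=0)\to 0$.

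\emph{Upper bound (ii).} This combines two ingredients. First, a first-moment bound on clique-subspaces: the expected number of $d$-dimensional $W$ with $W\setminus\{0\}\subseteq A$ is $\binom{n}{d}_{2}\,2^{-(2^{d}-1)}$, with logarithm $d(n-d)-2^{d}+O(d)$. For $d=d_n+1$ this equals $n\log n\,(1-2^{1-\theta_n})+n\log\log n+O(n)$, which tends to $-\infty$ whenever $1-\theta_n>2\log\log n/\log n$, i.e.\ for $n\in G$; for $d\ge d_n+2$ it tends to $-\infty$ unconditionally, since then $2^{d}\ge 2n\log n$ dwarfs $dn$, and the expectations are decreasing in $d$ beyond that point. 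Hence, for $n\in G$, with high probability there is no clique-subspace of any dimension $\ge d_n+1$. Second, one uses the elementary additive inequality $|S+S|\ge 2^{\lceil\log|S|\rceil}$ for every $S\ni 0$ in $\mathbb{F}_2^n$, together with its Kneser-type sharpening that the inequality is strict as soon as $\dim\langle S\rangle>\lceil\log|S|\rceil$, with a quantitative gain growing in the excess $\dim\langle S\rangle-\lceil\log|S|\rceil$.

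Now let $S$ be a clique with $|S|>2^{d_n}$; normalise so $0\in S$ and put $V=\langle S\rangle$, $r=\dim V$, so that $r\ge\lceil\log|S|\rceil\ge d_n+1$, while $r\le d_n+2$ by Theorem~\ref{T:main}. If $r=\lceil\log|S|\rceil$ then $|S+S|\ge 2^{r}=|V|$ forces $S+S=V$, so $V\setminus\{0\}=(S+S)\setminus\{0\}\subseteq A$ and $V$ is a clique-subspace of dimension $r\in\{d_n+1,d_n+2\}$, excluded by the first ingredient. It remains to rule out the \emph{unbalanced} cliques, those with $r>\lceil\log|S|\rceil$, and this is the crux. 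The strategy is to feed the quantitative Kneser-type bound on $|S+S|$, as a function of the excess dimension, into a first moment, forcing $A$ to contain a highly structured configuration --- morally, a union of boundedly many cosets of a subspace of dimension about $\log|S|$ --- whose occurrence can be controlled by a union over the \emph{few} such subspaces rather than over the exponentially many candidate vertex sets, with the cap $|S|\le(1+\epsilon)n\log n$ from Theorem~\ref{T:main} keeping the excess dimension bounded. I expect this last step --- extracting a sharp enough ``$|S+S|$ versus $\dim\langle S\rangle$'' estimate and organising the resulting union bound --- to be the principal obstacle, and to rely on a refinement of the additive-combinatorial machinery already developed for the upper bound in Theorem~\ref{T:main}.
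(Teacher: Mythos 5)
Your lower bound (the second-moment argument for $d_n$-dimensional clique subspaces) matches the paper's (which is borrowed from Green), and your density calculation for $G$ is essentially the same idea the paper uses, packaged differently. The problem is the upper bound, where your proposal has a genuine gap.

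You split cliques $S$ with $|S|>2^{d_n}$ into ``balanced'' ($r=\lceil\log|S|\rceil$) and ``unbalanced'' ($r>\lceil\log|S|\rceil$), where $r=\dim\langle S\rangle$, and handle the balanced case by a first moment over subspaces. But the step ``$r\le d_n+2$ by Theorem~\ref{T:main}'' does not follow: Theorem~\ref{T:main} bounds $|S|$, not $\dim\langle S\rangle$, and a clique of size $\approx n\log n$ could a priori span a subspace of dimension $\approx n\log n$. Bounding $r$ requires first ruling out cliques of large doubling (via a union bound and an $|S_k^l|$ estimate, as in Section~\ref{S:large}) and then invoking a Fre\u{\i}man--Ruzsa type structure theorem (Even-Zohar's Theorem~\ref{T:zohar}); nothing in your sketch supplies either. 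More seriously, you explicitly defer the unbalanced case as ``the crux,'' offering only the heuristic that a quantitative Kneser bound plus a union over subspaces should work. That is precisely the content of the paper's Theorem~\ref{T:familyF} and Corollary~\ref{C:familyF}: using the $\mathbb{F}_2^n$ regularity lemma plus Kneser, one constructs a small family $\mathcal{F}$ of sets of size $\ge(2-\delta)k'$ each contained in $X\widehat{+}X$ for some small-doubling $X$, and then bounds $P_{\mathrm{small}}$ by $|\mathcal{F}|\cdot 2^{-(2-\delta)k'}$ uniformly over all small-doubling cliques (balanced and unbalanced alike). The paper's proof of the theorem in question is then just a direct plug-in of $k=2^{d_n}+1$ into that machinery (Proposition~\ref{P:onepoint}), combined with the density bookkeeping. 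Your proposal identifies the right ideas at a high level but does not close the case that actually carries the difficulty, and the justification you offer for keeping the excess dimension bounded is incorrect as stated.
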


In Section \ref{S:onepointchromatic} we will show that the analogous result is also true for the chromatic number $\chi$. The proof will turn out to be a straightforward consequence of the result about the clique number.

\begin{theorem}\label{T:onepointchrom}
There exists a sequence $(n_l)$ of density $1$ such that
$$\lim_{l\to\infty}\mathbb{P}(\chi_{n_l} = 2^{n-\lfloor \log n_l+\log\log n_l\rfloor})= 1.$$	
\end{theorem}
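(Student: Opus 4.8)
The plan is to derive Theorem~\ref{T:onepointchrom} from Theorem~\ref{T:onepoint} by combining two elementary observations about the random Cayley sum graph $\Gamma$ on $\mathbb{F}_2^n$ with generating set $A$. First, because the ambient group has characteristic $2$ the translations $x\mapsto x+a$ satisfy $(x+a)+(y+a)=x+y$ and hence are automorphisms of $\Gamma$ (in $\mathbb{F}_2^n$ the Cayley sum graph is just the ordinary Cayley graph), so $\Gamma$ is vertex-transitive; consequently in any proper colouring each colour class is an independent set and so has size at most $\alpha_n$, giving $\chi_n\ge 2^n/\alpha_n$. Second, the complement of $\Gamma$ is the Cayley sum graph on $A^c=\mathbb{F}_2^n\setminus A$, and $A^c$ has the same distribution as $A$, so $\alpha_n$ and $\omega_n$ are identically distributed. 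Theorem~\ref{T:onepoint} therefore supplies a density-$1$ sequence $(n_l)$ along which $\alpha_{n_l}=2^{k_l}$ with high probability, where $k_l=\lfloor\log n_l+\log\log n_l\rfloor$, and combined with the first observation this yields $\chi_{n_l}\ge 2^{n_l-k_l}$ with high probability.

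For the matching upper bound, note that a linear subspace $U\le\mathbb{F}_2^n$ of dimension $k$ is an independent set of $\Gamma$ exactly when $(U\setminus\{0\})\cap A=\emptyset$, a condition that depends only on $U$. Hence whenever such a $U$ exists, every one of the $2^{n-k}$ cosets $x+U$ is independent as well (since $(x+u)+(x+u')=u+u'$ runs over $U\setminus\{0\}$ as $u\neq u'$ range over $U$), and as these cosets partition $\mathbb{F}_2^n$ they form a proper colouring with $2^{n-k}$ colours, so $\chi_n\le 2^{n-k}$. It thus suffices to show that with high probability along $(n_l)$ there is a $k_l$-dimensional subspace $U$ with $U\setminus\{0\}\subseteq A^c$, i.e.\ a clique of size $2^{k_l}$ in the complement graph that is a linear subspace. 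But this is precisely what the lower-bound half of Theorem~\ref{T:onepoint} (applied to the complement) provides, at least in its natural proof: the obvious construction is the second moment method applied to the number of $k$-dimensional subspaces $V$ with $V\setminus\{0\}\subseteq A$, whose expectation is roughly $2^{k(n-k)-(2^k-1)}$ and tends to infinity because $k(n-k)-(2^k-1)\approx n\log\log n$ when $2^k\approx n\log n$. Plugging such a subspace into the coset partition gives $\chi_{n_l}\le 2^{n_l-k_l}$, and together with the lower bound this proves $\chi_{n_l}=2^{n_l-k_l}$ with high probability.

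The one point that needs attention is this last step: the argument requires the maximum clique produced in the proof of Theorem~\ref{T:onepoint} to be (a translate of) a linear subspace rather than an arbitrary clique. If the lower bound there is obtained via the subspace second moment computation sketched above, nothing more is needed; otherwise one simply repeats that computation for subspaces, which is no harder than the general clique count and uses the same density-$1$ set of $n$. Everything else --- vertex-transitivity, the symmetry $A\leftrightarrow A^c$, and the coset partition --- is routine.
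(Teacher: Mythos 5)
Your proposal is correct and follows essentially the same route as the paper: use the symmetry $A\leftrightarrow A^{c}$ to transfer Theorem~\ref{T:onepoint} to the independence number, apply the pigeonhole bound $\chi\ge 2^{n}/\alpha$ for the lower bound, and for the upper bound observe that the second–moment argument of Section~\ref{S:lower} produces a linear subspace $V$ (rather than an arbitrary clique/independent set), so that colouring by cosets of $V$ gives a proper colouring with exactly $2^{n-\lfloor\log n+\log\log n\rfloor}$ colours. The paper's proof makes exactly these observations, including the key point you flag that the extremal independent set must be a subspace so the coset partition works.
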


One can consider this result as the $\mathbb{F}_2^n$ analogue of the famous question by Bollob\'{a}s \cite{bollobasconjecture} claiming that the clique number in the Erd{\H{o}}s-R\'{e}nyi $G(N,\frac12)$ model is concentrated on $O(1)$ values which is still wide open. However, some very interesting results have been obtained in models $G(n,p)$ for $p$ very small, see e.g.\ the work of Alon and Krivelevich \cite{alonkrivelevich} where $p=n^{-1/2-\delta}$. 

Additionally, Green \cite{greenchromatic} asked what is, asymptotically, the chromatic number of the random Cayley graph of $\mathbb{F}_2^n$. The previous theorem gives a very precise answer for $n$ in a set of density $1$. In the same way we will prove Theorem~\ref{T:onepointchrom}, it is also possible to use other results for the clique number to say something about the chromatic number for every $n$, but unfortunately we were unable to give the complete answer to Green's question.

We will mostly concentrate on the upper bound in Theorem~\ref{T:main}, since the lower bound is essentially already contained in \cite{green}; we provide the details for the latter in Section~\ref{S:lower}. Notice that the set $X\subset \mathbb{F}_2^n$ spans a clique in the random Cayley graph generated by the set $A$ if and only if $X\widehat{+}X\subset A$, where $X\widehat{+}X = \{x_1+x_2\colon x_1,x_2\in X, x_1\neq x_2\}$. This observation, together with Markov's inequality, was used in \cite{green} to derive the inequality
$$\mathbb{P}(\omega\geq k)\leq \sum_{l\geq k-1}|S_k^l|2^{-l},$$
where $S_k^l$ is the family of sets $X\subset \mathbb{F}_2^n$ satisfying $|X|=k$ and $|X\widehat{+}X|=l$. If one has a good upper bound for $|S_k^l|$, the right hand side is $o(1)$ and $k$ is an upper bound for the clique number with probability tending to $1$. We will use this approach only to deal with sets $X$ of large doubling ($|X\widehat{+}X|\geq 10|X|$); we give details in Section~\ref{S:large}. The main downside of this approach is that it fails to deal with any dependencies between events $\{X\widehat{+}X\subset A\}$ for different sets $X$.

We have the obvious inequality
\begin{equation}\label{E:unionbound}
\mathbb{P}(\omega\geq k) \leq \mathbb{P}\left(\bigcup_{|X|=k, |X\widehat{+}X|<10k}\{X\widehat{+}X\subset A\}\right) + \mathbb{P}\left(\bigcup_{|X|=k, |X\widehat{+}X|\geq 10k}\{X\widehat{+}X\subset A\}\right).
\end{equation}

We denote the two terms on the right hand side by $P_{\mathrm{small}}(n)$ and $P_{\mathrm{large}}(n)$, respectively. In Section~\ref{S:small} we introduce a new approach which deals with some of the dependencies mentioned above, and thus effectively bounds $P_{\mathrm{small}}(n)$. We prove the optimality result in Section~\ref{S:optimality} and the one-point concentration of the clique number in Section~\ref{S:onepointclique}.

\textsl{Acknowledgements.} I would like to thank Ben Green for communicating the problem and helpful suggestions and to Sean Eberhard and Freddie Manners for fruitful discussions.

\section{Notation}\label{S:notation}

Although most of the notation was implicitly used in the introduction, we give it here for the reader's convenience. In this paper $N$ will always denote the size of the group $\mathbb{F}_2^n$, so $N=2^n$. For sets $X,Y\subset \mathbb{F}_2^n$ we will denote their sumset by $X+Y$, so $X+Y=\{x+y\colon x\in X, y\in Y\}$. We will also consider their \emph{restriced sumset} $X\widehat{+}Y=\{x+y\colon x\in X, y\in Y, x\neq y\}$. Notice that in the $\mathbb{F}_2^n$ setting we always have $X\widehat{+}Y=(X+Y)\setminus\{0\}$. We will also use standard $O$-notation: if $f,g$ are two functions on positive integers we will write $f(n)=O(g(n))$ (or $|f(n)|\lesssim |g(n)|$) if there exists $C>0$ such that $|f(n)|\leq C|g(n)|$ for all large enough $n$. Additionally, $f(n)=o(g(n))$ would mean that $f(n)/g(n)$ tends to $0$ as $n$ tends to infinity.

\section{Sets of large doubling}\label{S:large}

Our aim in this section is to prove that $P_{\mathrm{large}}(n)=o(1)$ for suitable choice of $k$. This is basically a consequence of results obtained in \cite{green} joined with a new result by Even-Zohar \cite{evenzohar}.

First of all, note that by the union bound, we have
$$\mathbb{P}\left(\bigcup_{|X|=k, |X\widehat{+}X|\geq 10k}\{X\widehat{+}X\subset A\}\right)\leq \sum_{l\geq 10k}|S_k^l|2^{-l}.$$

Let $X\subset \mathbb{F}_2^n$, $Y\subset \mathbb{F}_2^m$ be fixed sets. We will say that a bijection $f\colon X\to Y$ is a Fre\u{\i}man isomorphism if for every $x_1,x_2,x_3,x_4\in X$
\begin{equation}\label{E:freimandefinition}
x_1+x_2=x_3+x_4\text{ if and only if }f(x_1)+f(x_2)=f(x_3)+f(x_4).
\end{equation}
In this case we will say that sets $X$ and $Y$ are Fre\u{\i}man isomorphic. Let $r$ be the largest integer such that $X$ is Fre\u{\i}man isomorphic to a subset of $\mathbb{F}_2^r$ which is not contained in a proper affine subspace. This number $r$ is known as the Fre\u{\i}man dimension of the set $X$; we will denote it by $r(X)$. 

Proposition 26 from \cite{green} gives us the following bound for $|S_k^l|$.

\begin{proposition}\label{prop:bounds}
Let $r_{k,l} = \max_{X\in S_k^l}r(X)$. We have
$$|S_k^l|\leq N^{r_{k,l}+1}k^{4k}.$$
For $l\leq k^{31/30}$ there is a better bound
$$|S_k^l|\leq N^{r_{k,l}+1}\left(\frac{el}{k}\right)^{k}\exp (k^{31/32}).$$
\end{proposition}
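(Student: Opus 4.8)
The statement to prove is Proposition \ref{prop:bounds}, bounding $|S_k^l|$ in terms of the Freĭman dimension $r_{k,l}$. Since this is quoted as "Proposition 26 from \cite{green}", my job is really to reconstruct Green's argument. The key idea is that to specify a set $X \in S_k^l$ it is enough to (a) specify its Freĭman-isomorphism type and (b) specify an embedding of that type into $\mathbb{F}_2^n$. Step (a) is the combinatorial heart, step (b) contributes the factor $N^{r_{k,l}+1}$: a set $X$ of Freĭman dimension $r = r(X)$ is, by definition, Freĭman isomorphic to a subset $X'$ of $\mathbb{F}_2^r$ not contained in a proper affine subspace, hence spanning $\mathbb{F}_2^r$ affinely; once we fix $X'$, any Freĭman-embedding of $X'$ into $\mathbb{F}_2^n$ is determined by an affine map on the span, i.e.\ by the images of an affine basis of $\mathbb{F}_2^r$ — that is $r+1$ points of $\mathbb{F}_2^n$, giving at most $N^{r+1}$ choices. (One should note the embedding $X \hookrightarrow \mathbb{F}_2^n$ extends uniquely to the affine span, and distinct $X$ arise from distinct such data.)

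**Counting isomorphism types.** What remains is to bound the number of Freĭman-isomorphism classes of sets $X$ with $|X| = k$, $|X\widehat{+}X| = l$. First, each such class, having Freĭman dimension $\le r_{k,l}$, has a representative inside $\mathbb{F}_2^{r_{k,l}}$; but $r_{k,l}$ can be as large as $\sim \log\binom{l}{1}$-ish, so we cannot afford to count all $k$-subsets of $\mathbb{F}_2^{r_{k,l}}$ naively — instead the bound $k^{4k}$ must come from a direct combinatorial encoding. The natural approach: a Freĭman-isomorphism type of a $k$-set $X$ with $|X+X| = l+1$ (or $l$, modulo the $0$ element) is determined by the "addition table" structure, i.e.\ by the partition of ordered pairs $(i,j)$, $1\le i<j\le k$, according to which element of $X\widehat{+}X$ equals $x_i+x_j$. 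There are $\binom{k}{2} < k^2/2$ pairs and $l$ classes, giving at most $l^{k^2/2}$ colourings — but that's too weak. The refinement is that the type is actually pinned down by far less data: fix an affine basis; then $X$ is Freĭman-generated by the relations, and one shows the whole type is determined by knowing, for each of the $k$ points, its coordinates with respect to a spanning set of size $O(k)$ — or more cleanly, by choosing a spanning subset and expressing the remaining points, bounding the count by $k^{O(k)}$. I expect the precise bookkeeping here (matching the exponent $4k$) to be the main obstacle, and I would follow Green's Proposition 26 closely rather than reinvent it.

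**The improved bound for small $l$.** For $l \le k^{31/30}$ we want the sharper $|S_k^l| \le N^{r_{k,l}+1}(el/k)^k \exp(k^{31/32})$. Here the point is that when the doubling $l/k$ is small (close to $1$), the set is highly structured — it behaves almost like a coset of a subspace, or more precisely like a Sidon-type/low-additive-energy configuration in reverse — and the number of isomorphism types is governed by something like choosing which $\sim l$ sums are realised among $\binom{k}{2}$ pairs, but with strong constraints forcing most pairs into few classes. The factor $(el/k)^k$ looks like $\binom{l}{k}$-type counting (choose the realised sumset structure), and $\exp(k^{31/32}) = \exp(o(k))$ is slack absorbing lower-order encoding cost; the threshold $31/30$ and exponent $31/32$ are artefacts of making some Plünnecke/Freĭman-type or dyadic-pigeonhole estimate close. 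Again I would cite and unpack Green's argument.

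**Where I expect the difficulty.** The embedding factor $N^{r_{k,l}+1}$ is essentially immediate from the definition of Freĭman dimension plus the uniqueness of affine extension. The real work — and where I anticipate getting the exponents exactly right is fiddly — is the type-counting giving $k^{4k}$ in general and the $(el/k)^k\exp(k^{31/32})$ refinement for small doubling; these require a careful encoding of a Freĭman-isomorphism class by $O(k)$ "coordinates" each taking at most $\mathrm{poly}(k)$ (resp.\ at most $\sim l/k$) values, and verifying that this encoding is injective on classes. Since the paper attributes this verbatim to \cite{green}, the honest move is to state it as imported and move on.
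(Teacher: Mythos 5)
The paper does not prove this proposition: it is imported verbatim as Proposition~26 of Green~\cite{green}, which is exactly what you conclude the ``honest move'' is, so your approach matches the paper's. Your sketch of the $N^{r_{k,l}+1}$ factor is in fact sound — the claim that a Fre\u{\i}man embedding of a type representative $X'\subset\mathbb{F}_2^r$ is determined by its values on an affine basis of $\mathbb{F}_2^r$ follows from the \emph{maximality} in the definition of $r(X)$ (two embeddings agreeing on a basis but differing somewhere would give, via $x\mapsto(\phi_1(x),\phi_2(x))$, a Fre\u{\i}man-isomorphic copy of $X'$ of strictly larger affine dimension), and the deferral of the $k^{4k}$ and $(el/k)^k\exp(k^{31/32})$ type-counting to Green is consistent with how the paper itself treats the statement.
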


We will bound $r_{k,l}$ from Proposition \ref{prop:bounds} using the following version of Fre\u{\i}man's theorem proved by Even-Zohar~\cite{evenzohar}.

\begin{theorem}\label{T:zohar}
Suppose $X\subset \mathbb{F}_2^n$ has cardinality $k$ and that $|X+X|\leq Kk$ for some $K\geq 1$. Then there is a subgroup $H$ containing $X$ such that
$$|H|\leq \frac{4^Kk}{2K}.$$
\end{theorem}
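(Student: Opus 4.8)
The plan is to prove Theorem~\ref{T:zohar} directly from a Freiman-type result over $\mathbb{F}_2^n$. Since $X$ has doubling $|X+X|\le Kk$, the Freiman--Ruzsa theorem over $\mathbb{F}_2^n$ already gives a subgroup $H\supseteq X$ with $|H|\le F(K)k$ for some function $F$; the whole content of the statement is getting the explicit and essentially optimal bound $F(K)=4^K/(2K)$. So I would not try to reprove structure from scratch; instead I would run the classical covering argument of Ruzsa but tracked with exact constants, and then optimize.

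First I would recall the covering lemma in the form: if $|X+X|\le Kk$, then $X\widehat{+}X$ (equivalently $X+X$, up to the element $0$) is covered by at most $K$ translates of $X$ — more precisely, a standard Ruzsa covering argument gives that $X+X\subseteq X+S$ for a set $S$ with $|S|\le |X+X|/|X| \le K$; iterating, the group $\langle X - x_0\rangle$ generated by $X$ (after translating so that $0\in X$) is contained in $X + \langle S\rangle$ where $\langle S\rangle$ is the subgroup generated by $S$, hence has size at most $k\cdot 2^{|S|}\le k\cdot 2^{K}$. That already yields $|H|\le 2^K k$, which is the shape of the bound but off by the factor $4^K/2^K = 2^K$ in strength and missing the $1/(2K)$ gain. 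To get to $4^K/(2K)$ one has to be more careful: the right statement (this is exactly Even-Zohar's theorem, and the proof is a refinement of Ruzsa and of Green--Ruzsa / Konyagin-type arguments) comes from iterating the covering more cleverly, re-optimizing at each step which translates to keep, and using that in $\mathbb{F}_2^n$ sums of $j$ elements of a spanning set of size $s$ only reach $\binom{s}{\le j}$ points rather than $2^s$ immediately — so one bounds $|H|$ by $k\sum_{j=0}^{s}\binom{s}{j}$-type quantities and balances $s$ against $K$. The constant $4^K = 2^{2K}$ emerges from a two-fold application (covering $X+X$ and then iterating), and the polynomial factor $1/(2K)$ from the fact that the extremal configuration is a subspace of dimension roughly $\log_2(Kk/k)+\log_2 k$, whose doubling is slightly less than $K$.

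Concretely, the steps I would carry out are: (1) translate so $0\in X$, and fix $x_1,\dots,x_t\in X$ maximal with the sets $x_i+X$ pairwise disjoint — then $t\cdot k\le |X+X|\le Kk$ so $t\le K$, and by maximality $X\subseteq \{x_1,\dots,x_t\}+X-X = S+X-X$ hence, since we are in characteristic $2$, $X\subseteq S + (X+X)$ and more usefully $2X\subseteq S+2X$ leads after iteration to $\langle X\rangle \subseteq \langle S\rangle + X$; (2) bound $|\langle S\rangle|$: a priori $2^{|S|}\le 2^K$, but refine by noting we only ever need sums of elements of $S$ that actually arise, and track the count via binomial coefficients, optimizing the free parameter to replace $2^K$ by $4^K/(2K)$; (3) conclude $H=\langle X\rangle$ works with $|H|\le |\langle S\rangle|\cdot|X| \le \frac{4^K}{2K}\,k$.

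I expect the main obstacle to be step (2): getting the exact constant $4^K/(2K)$ rather than merely $C^K$ requires the sharp form of the covering iteration, and a naive application of Ruzsa's covering lemma loses a factor exponential in $K$. The cleanest route is simply to invoke Even-Zohar's theorem verbatim — which is legitimate since it is quoted as an external result — and treat the ``proof'' here as the citation plus the remark that his bound is what makes the doubling threshold $K=\log k/\log\log k$ (implicit in later sections) yield a subgroup of size $k^{1+o(1)}$, i.e.\ of Freiman dimension roughly $\log k + \log\log k$, which is precisely the quantity controlling $r_{k,l}$ in Proposition~\ref{prop:bounds}. In other words, the honest statement of the proof is: this is \cite{evenzohar}, and the only thing to check is that the hypothesis $|X+X|\le Kk$ is exactly what we will have for sets $X$ with $|X\widehat{+}X|\le Kk - 1 < 10k$ in the regime of interest, which is immediate since $|X+X|\le|X\widehat{+}X|+1$.
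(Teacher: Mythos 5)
The paper itself does not prove Theorem~\ref{T:zohar}; it states it and cites Even-Zohar \cite{evenzohar}, and your bottom line (``the cleanest route is simply to invoke Even-Zohar's theorem verbatim'') therefore matches the paper exactly. One technical caution about the Ruzsa-covering sketch you include along the way: the bound $t\le K$ you extract comes from covering $X$ itself by translates of $X-X$, but the iteration that is supposed to reach $\langle X\rangle\subseteq\langle S\rangle + X$ actually needs the \emph{stable} covering $2X-X\subseteq T+(X-X)$ (equivalently in $\mathbb{F}_2^n$, $3X\subseteq T+2X$), and bounding that $|T|$ via Ruzsa covering costs a Pl\"unnecke--Ruzsa factor, giving $|T|\lesssim K^{O(1)}$ rather than $K$; the line ``$2X\subseteq S+2X$ leads after iteration to $\langle X\rangle\subseteq\langle S\rangle + X$'' is vacuous as written and would not close the induction. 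Since you do not rely on this sketch and correctly fall back on the citation, the proposal is fine, but the sketch as stated would not even recover the crude $2^{O(K)}k$ bound, let alone $4^Kk/(2K)$.
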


\begin{corollary}\label{C:zohar}
Suppose $X\subset \mathbb{F}_2^n$, $|X|=k$ and $|X+X|=l$. Then $$r(X)\leq \log k + 2l/k.$$
\end{corollary}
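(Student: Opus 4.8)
The plan is to apply Theorem~\ref{T:zohar} to the set $X$ with $K = l/k$ and then translate the bound on the size of the containing subgroup into a bound on the Fre\u{\i}man dimension. First I would note that since $|X+X| = l = (l/k)\cdot k$, Theorem~\ref{T:zohar} (applied with $K = l/k$, which is $\geq 1$ because $l \geq k$ always holds, as $X+X \supseteq X$) gives a subgroup $H \supseteq X$ with $|H| \leq \frac{4^{l/k}k}{2l/k} = \frac{k\cdot 4^{l/k}}{2l/k}$. Since $H$ is a subgroup of $\mathbb{F}_2^n$, it is isomorphic to $\mathbb{F}_2^d$ where $d = \log|H|$, and $X \subseteq H$, so $X$ is Fre\u{\i}man isomorphic (indeed, via the identity map restricted to the isomorphism $H \cong \mathbb{F}_2^d$) to a subset of $\mathbb{F}_2^d$. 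By the definition of Fre\u{\i}man dimension as the \emph{largest} such $r$, we need to argue that $r(X) \leq d$; this follows because if $X$ were Fre\u{\i}man isomorphic to a subset $X' \subseteq \mathbb{F}_2^{r}$ not contained in a proper affine subspace, then $X'$ affinely spans $\mathbb{F}_2^{r}$, and one checks that a Fre\u{\i}man isomorphism between $X$ and $X'$ extends (after translating so that $0 \in X$, $0 \in X'$) to a group isomorphism between the subgroups generated by $X$ and by $X'$; hence $2^{r} = |\langle X'\rangle| \leq |\langle X \rangle| \leq |H|$, giving $r(X) \leq \log|H|$.

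It then remains to bound $\log|H|$. We have
$$
\log|H| \leq \log\!\left(\frac{k\cdot 4^{l/k}}{2l/k}\right) = \log k + \frac{2l}{k} - \log\!\left(\frac{2l}{k}\right) \leq \log k + \frac{2l}{k},
$$
where the last inequality uses $\log(2l/k) \geq 0$, valid since $l \geq k$. This is exactly the claimed bound $r(X) \leq \log k + 2l/k$.

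The only genuinely non-routine point is the claim that a Fre\u{\i}man isomorphism on $X$ forces $2^{r(X)} \leq |\langle X \rangle|$, i.e.\ that Fre\u{\i}man dimension is monotone under passing to a containing subgroup. In characteristic $2$ this is clean: translating so that $0 \in X$, condition~\eqref{E:freimandefinition} with $x_3 = x_4 = 0$ already forces a Fre\u{\i}man isomorphism $f$ with $f(0) = 0$ to respect all additive relations $\sum_{i} x_i = 0$ among elements of $X$ (by iterating~\eqref{E:freimandefinition}), so $f$ extends to a well-defined homomorphism $\langle X \rangle \to \mathbb{F}_2^{r}$ which is surjective onto $\langle X' \rangle = \mathbb{F}_2^{r}$; hence $2^r \leq |\langle X \rangle| \leq |H|$. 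I expect this monotonicity to be either quoted from \cite{green} or dispatched in a sentence, with the bulk of the corollary being the one-line computation above. No serious obstacle is anticipated.
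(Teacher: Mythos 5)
Your overall plan (apply Theorem~\ref{T:zohar} and convert the subgroup bound into a Fre\u{\i}man-dimension bound) is the right shape, but you apply the theorem to the wrong object, and the ``monotonicity'' lemma you invoke to compensate is in fact \emph{false}. Your claimed lemma is: if $X\subseteq H$ with $H$ a subgroup, then $r(X)\leq \log|H|$. Here is a counterexample. Take $X=\{0,e_1,e_2,e_3,e_4,w\}\subseteq\mathbb{F}_2^4$ with $w=e_1+e_2+e_3+e_4$, and $X'=\{0,e_1,e_2,e_3,e_4,e_5\}\subseteq\mathbb{F}_2^5$. Neither set admits any nontrivial additive quadruple $x_1+x_2=x_3+x_4$ (each pair-sum value is attained by exactly one unordered pair), so the bijection $f$ sending $e_i\mapsto e_i$ and $w\mapsto e_5$ satisfies \eqref{E:freimandefinition} vacuously and is a Fre\u{\i}man isomorphism. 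Since $X'$ contains $0$ and spans $\mathbb{F}_2^5$, it lies in no proper affine subspace, so $r(X)\geq 5$; yet $X$ is contained in the subgroup $\mathbb{F}_2^4$ of size $16$, so $\log|H|=4<5$. In particular a Fre\u{\i}man $2$-isomorphism fixing $0$ does \emph{not} in general extend to a group homomorphism $\langle X\rangle\to\langle X'\rangle$: such an extension would have to send $e_1+e_2+e_3+e_4$ both to $e_1+e_2+e_3+e_4$ (by linearity on the $e_i$'s) and to $e_5$ (since $f(w)=e_5$). Condition~\eqref{E:freimandefinition} only controls $4$-term relations, and the relation $e_1+e_2+e_3+e_4+w=0$ is a $5$-term relation that a Fre\u{\i}man $2$-isomorphism is free to destroy.

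The paper's proof avoids this trap by applying Theorem~\ref{T:zohar} not to $X$ but to the Fre\u{\i}man-isomorphic copy $f(X)\subseteq\mathbb{F}_2^r$ realizing $r=r(X)$, using the standard fact that a Fre\u{\i}man $2$-isomorphism preserves the cardinality of the sumset, so $|f(X)+f(X)|=|X+X|=l$ and Even-Zohar's hypothesis holds for $f(X)$ with the same $K=l/k$. The subgroup $H'\leqslant\mathbb{F}_2^r$ produced by the theorem contains $f(X)$; but subgroups of $\mathbb{F}_2^r$ are affine subspaces, and $f(X)$ is contained in no proper affine subspace by the definition of $r(X)$, so $H'=\mathbb{F}_2^r$. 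Hence $2^r\leq\frac{4^{l/k}k}{2l/k}$, and your final computation $\log k+\frac{2l}{k}-\log\!\left(\frac{2l}{k}\right)\leq\log k+\frac{2l}{k}$ (valid since $l\geq k$) now finishes the proof correctly. That computation is fine; the gap is entirely in which set you feed into Theorem~\ref{T:zohar}.
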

\begin{proof}
	Let $f\colon X\to\mathbb{F}_2^r$ be an injection which satisfies \eqref{E:freimandefinition}. Notice that $|f(X)+f(X)|=|X+X|\leq l$. By the previous theorem we conclude that $f(X)$ is contained in a subgroup of size at most $\frac{4^{K}k}{2K}$, for $K=l/k$. All subgroups of $\mathbb{F}_2^n$ are also subspaces and the claim easily follows.
\end{proof}

Let $k\geq \frac12 n\log n$. Notice that $|X+X|=|X\widehat{+}X|+1$ for any set $X$. Using the first bound in Proposition~\ref{prop:bounds} and the bound for the Fre\u{\i}man dimension given in Corollary~\ref{C:zohar} we get

\begin{align}
\sum_{l\geq k^{31/30}} |S_k^l|2^{-l} &\leq \sum_{l\geq k^{31/30}}N^{\log k + 2(l+1)/k+1}k^{4k}2^{-l} \leq \sum_{l\geq k^{31/30}}2^{l(-1+\frac{n\log k}{l}+\frac{3n}{k}+\frac{4k\log k}{l})}\nonumber\\
&\leq \sum_{l\geq k^{31/30}}2^{l(-1+o(1))} = o(1).\label{E:first}
\end{align}
Additionally, using the second bound in Proposition~\ref{prop:bounds}, we get
\begin{align}
\sum_{10k\leq l < k^{31/30}} |S_k^l|2^{-l} &\leq \sum_{10k\leq l < k^{31/30}}N^{\log k + 2(l+1)/k+1}\left(\frac{el}{k}\right)^{k}\exp (k^{31/32})2^{-l}\nonumber\\
&\leq \sum_{10k\leq l < k^{31/30}}2^{l(-1+\frac{n\log k}{l}+\frac{3n}{k}+\frac{k}{l}\log\left(\frac{el}{k}\right)+\frac{2k^{31/32}}{l})}\nonumber\\
&\leq \sum_{10k\leq l < k^{31/30}}2^{l(-1+\frac{1}{5}+\frac{k}{l}\log\left(\frac{el}{k}\right)+o(1))}=o(1).\label{E:second}
\end{align}
Combining \eqref{E:first} and \eqref{E:second} we conclude that $P_{\mathrm{large}}(n)=o(1)$ for any $k \geq \frac12 n\log n$.

\section{Sets of small doubling}\label{S:small}

In this section we deal with sets of small doubling. The strategy is the following. First of all, notice that every set $X$ of size $k$ satisfying $|X\widehat{+}X|<10k$ is by Theorem~\ref{T:zohar} contained in a subspace of size $O(k)$. Now fix a subspace $V$ of this size and consider all sets of size $k$ contained in it. The crucial claim we will prove here is that there exists a small family $\mathcal{F}$ of large sets such that for each $X$, the set $X\widehat{+}X$ contains a set from this family. This will enable us to exploit dependencies between events $\{X\widehat{+}X\subset A\}$ for various sets $X$. More precisely, we will use the following obvious inequality
\begin{align}
\mathbb{P}\left(\bigcup_{|X|=k, X\subset V}\{X\widehat{+}X\subset A\}\right) &= \mathbb{P}\left(\bigcup_{F\in\mathcal{F}}\bigcup_{|X|=k, X\subset V, F\subset X\widehat{+}X}\{X\widehat{+}X\subset A\}\right)\nonumber\\
&\leq \mathbb{P}\left(\bigcup_{F\in\mathcal{F}}\{F\subset A\}\right)\nonumber\\
&\leq \sum_{F\in\mathcal{F}}\mathbb{P}\left(F\subset A\right).\label{E:familyF}
\end{align}

Obviously, the crucial step in this approach is to find the family $\mathcal{F}$ with these properties and this is what we do in the following theorem.

\begin{theorem}\label{T:familyF}
Fix $\epsilon>0$. For large enough $n\geq n_0(\epsilon)$ the following holds. Let $k\geq \epsilon N$ and $k'$ be the power of $2$ satisfying $k'<k\leq 2k'$. There is a family $\mathcal{F}$ of subsets of $\mathbb{F}_2^n$ with the following properties 
\begin{enumerate}
\item[(i)] $|\mathcal{F}|\leq 2^{\epsilon N}$,
\item[(ii)] every $F\in \mathcal{F}$ has at least $(2-\epsilon)k'$ elements,
\item[(iii)] for every $X\subset \mathbb{F}_2^n$ with $k$ elements, there is $F\in \mathcal{F}$ such that $F\subset X+X$.
\end{enumerate}
\end{theorem}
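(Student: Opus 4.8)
The plan is to build the family $\mathcal{F}$ by a greedy/probabilistic covering argument on the level of the *sumsets* $X+X$. The guiding intuition is that if $X\subset\mathbb{F}_2^n$ has $k\geq\epsilon N$ elements, then $X+X$ is very large — in fact, for $k$ a positive proportion of $N$ one expects $|X+X|$ to be close to $N$ — so the sumsets $X+X$, as $X$ ranges over all $k$-subsets, form a collection of large subsets of $\mathbb{F}_2^n$, and we want to find a not-too-large family $\mathcal{F}$ of sets of size $\geq(2-\epsilon)k'$ that ``pierces'' every such sumset from below (i.e.\ each $X+X$ contains some $F\in\mathcal{F}$). First I would fix the target size $t=\lceil(2-\epsilon)k'\rceil$ and consider the family $\mathcal{G}$ of *all* $t$-subsets $F$ of $\mathbb{F}_2^n$ with the property that $F$ is contained in $X+X$ for at least one $k$-set $X$; property (iii) is then automatic for $\mathcal{G}$, and the whole content is to prune $\mathcal{G}$ down to size $2^{\epsilon N}$ while keeping (iii).

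The key structural input is a lower bound on $|X+X|$ that is uniform over all $X$ with $|X|=k\geq\epsilon N$. I would obtain this from the Cauchy--Davenport/Kneser-type behaviour in $\mathbb{F}_2^n$, or more simply from the observation that $X+X$ is a union of cosets of the subgroup $H=\langle X-X\rangle$ generated by the difference set, together with the fact that $|X|\geq\epsilon N$ forces $[\,\mathbb{F}_2^n:H\,]$ to be bounded (at most $1/\epsilon$), so $|X+X|\geq |H|\geq \epsilon N$; a small extra argument pushes this to $|X+X|\geq (1-\epsilon')N$ when $X$ is not contained in a coset of a small-index subgroup, and the small-index case can be treated separately by projecting onto $H$ and inducting on $n$. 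In any case, we may assume $|X+X|\geq(1-\delta)N$ for a $\delta=\delta(\epsilon)$ we control, which in particular is $\gg (2-\epsilon)k'$ since $k'\leq N$. The point of having $|X+X|$ of size $\Theta(N)$ (rather than merely $\geq t$) is that it gives us room: a *random* $t$-subset $F$ of $\mathbb{F}_2^n$ lands inside a fixed $X+X$ with probability roughly $((1-\delta))^{t}$, which is still a substantial probability because $\delta$ is small, and this is what keeps the covering family small.

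With the size bound in hand, I would run the following covering argument. Let $\mathcal{X}$ be the set of all $k$-subsets of $\mathbb{F}_2^n$, so $|\mathcal{X}|\leq 2^N$. Choose random $t$-subsets $F_1,F_2,\dots,F_M$ of $\mathbb{F}_2^n$ independently and uniformly; for a fixed $X\in\mathcal{X}$, the probability that no $F_i$ is contained in $X+X$ is at most $(1-p)^M$ where $p=\binom{|X+X|}{t}/\binom{N}{t}\geq c^{t}$ for some constant $c=c(\epsilon)<1$ close to $1$ (coming from $|X+X|\geq(1-\delta)N$). Taking $M$ slightly larger than $c^{-t}\cdot N\ln 2$ makes $(1-p)^M\cdot|\mathcal{X}| = o(1)$, so with positive probability the family $\mathcal{F}=\{F_1,\dots,F_M\}$ satisfies (iii); it satisfies (ii) by construction. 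The only thing to check is (i): we need $M\leq 2^{\epsilon N}$, i.e.\ $c^{-t}N\ln 2\leq 2^{\epsilon N}$, i.e.\ $t\log(1/c)+\log(N\ln 2)\leq \epsilon N$. Since $t\leq 2k'\leq 2N$ and $\log(1/c)$ can be made as small as we like by taking $\delta$ small (which in turn is governed by how close to $N$ we can force $|X+X|$ to be), this inequality holds once $\delta$, equivalently the admissible $\epsilon$ in the hypothesis, is chosen small enough and $n\geq n_0(\epsilon)$.

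The main obstacle, and the step that will require genuine work rather than bookkeeping, is establishing the uniform lower bound $|X+X|\geq(1-\delta)N$ (with $\delta\to0$ as $\epsilon\to0$) for *every* $k$-set $X$ with $k\geq\epsilon N$ — in particular ruling out, or handling by a separate coset-reduction, the sets $X$ that concentrate on a subgroup of small index, for which $|X+X|$ can be as small as $\Theta(\epsilon N)$ rather than $(1-\delta)N$. If that bound is only $|X+X|\geq\epsilon N$, then $c=c(\epsilon)$ is bounded away from $1$ and the bound $M\leq c^{-t}N\leq c^{-2k'}N$ is roughly $2^{2k'\log(1/c)}$, which exceeds $2^{\epsilon N}$ unless $\log(1/c)$ is small — so we really do need the near-full-size bound, and its proof (via Kneser's theorem and an induction on the dimension, peeling off the subgroup generated by $X-X$) is where the quantitative heart of the theorem lies. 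Everything downstream — plugging $\mathcal{F}$ into \eqref{E:familyF}, summing $\mathbb{P}(F\subset A)\leq 2^{-(2-\epsilon)k'}$ over $|\mathcal{F}|\leq 2^{\epsilon N}$ sets — is then routine.
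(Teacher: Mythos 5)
Your plan hinges on the claim that every $k$-set $X$ with $k\geq\epsilon N$ satisfies $|X+X|\geq(1-\delta)N$ for some small $\delta$. This is false, and the failure is not a fringe case: it is \emph{the} case the theorem is designed for. If $X$ is (or is close to) a subspace $H$ with $|H|=\Theta(\epsilon N)$, then $X+X\subseteq H$ has size $\Theta(\epsilon N)$, nowhere near $(1-\delta)N$. In the application in Section~\ref{S:small}, the theorem is invoked precisely for sets $X$ of small doubling, which by Theorem~\ref{T:zohar} lie inside a subgroup of size $O(k)$; with $N$ reinterpreted as the size of that subgroup, $X$ is a dense subset of it and $X+X$ can easily be only half the group, or a union of a few cosets of a medium-index subgroup. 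Your suggested remedy---``project onto $H$ and induct on $n$''---does not resolve anything: after projecting you face exactly the same obstruction one scale down, and you still have no control on the \emph{number} of distinct sumsets, which is what bounds $|\mathcal{F}|$.

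The quantitative failure is immediate. With $t=(2-\epsilon)k'\gtrsim\epsilon N$ and a worst-case $|X+X|\approx 2k'\approx k\approx\epsilon N$, the hitting probability $p=\binom{|X+X|}{t}/\binom{N}{t}$ is of order $2^{-\Theta(N\log(1/\epsilon))}$ rather than $2^{-O(\epsilon N)}$, so the required $M\approx p^{-1}N$ blows past $2^{\epsilon N}$ for every small fixed $\epsilon$. A purely random covering cannot take advantage of the fact that the sumsets $X+X$ are highly \emph{structured}; it treats them as arbitrary large sets, and for arbitrary large sets the answer would genuinely be no. The paper's proof is built around exactly this structure: an arithmetic regularity lemma (Theorem~\ref{T:regularity}, via Proposition~\ref{P:regularityapplied}) shows that for any dense $X$ there is a bounded-codimension subspace $V$ such that $X+X$ essentially contains a union of ``almost cosets'' of $V$, and Kneser's theorem (via Proposition~\ref{P:sandwich}) guarantees that this union has size at least $(2-\epsilon)k'$. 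The family $\mathcal{F}$ is then taken to be the \emph{explicit} collection of all such near-unions-of-cosets over all bounded-codimension $V$; the count $2^{o(N)}\cdot 2^{\epsilon^2 N}\leq 2^{\epsilon N}$ follows from counting subspaces, coset selections, and small deletions. So $\mathcal{F}$ is deterministic and structural, not random, and this is what makes the size bound (i) compatible with the size lower bound (ii). Without some substitute for the regularity step, the random covering cannot be made to work.
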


A careful reader might object that $k\geq \epsilon N$ is not the setting we have been working with until now, since $k$ of our interest is of about logarithmic size of the size of the ambient group. However, in the application of this theorem, we will consider our $k$-element subset as being contained in some subgroup of size $O(k)$, which we can do by Theorem \ref{T:zohar}. So $N$ here actually represents the size of this subgroup.

There are couple of ingredients we need in the proof of this theorem. The first one is a regularity lemma in $\mathbb{F}_2^n$. The variant we will use was first proven by Green in \cite{greenregular}.

Let $V\leqslant \mathbb{F}_2^n$. We give $V$ the uniform probability measure and $V^{\ast}$ the counting measure. Hence, for any $f\colon V\to \mathbb{R}$ and $r\in V^{\ast}$ we define corresponding Fourier coefficient by
$$\widehat{f}(r)=\mathbb{E}_{v\in V}f(v)(-1)^{\langle v,r\rangle}.$$

Fix a subset $X\subset \mathbb{F}_2^n$. For $i\in \mathbb{F}_2^n$ we consider function $1_{(X-i)\cap V}\colon V\to \mathbb{R}$. We will say that $i$ is an $\epsilon$-regular value for $X$ with respect to $V$ if
$$\sup_{r\neq 0}|\widehat{1_{(X-i)\cap V}}(r)|\leq \epsilon.$$
Notice that $i$ is $\epsilon$-regular value if and only if all the elements in $i+V$ are $\epsilon$-regular values. Indeed, this follows easily by noticing that $|\widehat{1_{(X-i)\cap V}}(r)| = |\widehat{1_{(X-(i+v))\cap V}}(r)|$ for all $v\in V$. Thus, we may also talk about $\epsilon$-regular cosets of $V$. Additionally, we will say that $X$ is $\epsilon$-regular with respect to $V$ if there are less than $\epsilon N$ choices of $i$ which are not $\epsilon$-regular values for $X$ with respect to $V$.

The following is Theorem 2.1 from \cite{greenregular}.

\begin{theorem}[Regularity lemma in $\mathbb{F}_2^n$]\label{T:regularity}
 Let $\epsilon\in(0,\frac12)$ and $X\subset\mathbb{F}_2^n$. Then there is a subspace $V\leqslant\mathbb{F}_2^n$ of codimension $O_{\epsilon}(1)$ such that $X$ is $\epsilon$-regular with respect to $V$.
\end{theorem}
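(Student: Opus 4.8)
The plan is to run the usual energy-increment (Koopman--von Neumann) argument, carried out Fourier-analytically. For a subspace $V\leqslant\mathbb{F}_2^n$ write $W=V^{\perp}$, let $d$ be the codimension of $V$ (so $\dim W=d$), let $\widehat{\cdot}$ denote the Fourier transform on all of $\mathbb{F}_2^n$, and set
\[
\mathcal{E}(V)=\sum_{r\in W}\bigl|\widehat{1_X}(r)\bigr|^2 .
\]
By Parseval this equals $\mathbb{E}_i\,\delta_i^2$, where $\delta_i=\mathbb{E}_{v\in V}1_X(i+v)$ is the density of $X$ in the coset $i+V$; hence $0\le\mathcal{E}(V)\le\|1_X\|_2^2=|X|/N\le 1$, and $\mathcal{E}$ is non-decreasing when $V$ is replaced by a subspace of itself. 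The one computation I would record at the outset is the identity
\[
\widehat{1_{(X-i)\cap V}}(r)=\sum_{s\in r+W}\widehat{1_X}(s)\,(-1)^{\langle i,s\rangle}\qquad(r\in V^{\ast}),
\]
obtained by expanding $1_X$ into characters of $\mathbb{F}_2^n$ and using orthogonality over $V$; here $V^{\ast}$ is identified with $\mathbb{F}_2^n/W$, so a nonzero $r$ is a coset $r+W$ disjoint from $W$. Two immediate consequences of this identity, which are exactly what the argument needs, are $\sum_{i\in\mathbb{F}_2^n}\bigl|\widehat{1_{(X-i)\cap V}}(r)\bigr|^2=N\,m_r$ (orthogonality of characters of $\mathbb{F}_2^n$), where $m_r:=\sum_{s\in r+W}|\widehat{1_X}(s)|^2$, and $\bigl|\widehat{1_{(X-i)\cap V}}(r)\bigr|^2\le|W|\,m_r$ (Cauchy--Schwarz).

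The core of the proof is the following increment step: \emph{if $X$ is not $\epsilon$-regular with respect to $V$, then there is a subspace $V'\leqslant V$ of codimension at most $d+2^{d}/\epsilon^2$ with $\mathcal{E}(V')\ge\mathcal{E}(V)+\epsilon^3$.} To see this, recall that there are at least $\epsilon N$ ``bad'' values $i$, each admitting a nonzero frequency $r(i)\in V^{\ast}$ with $\bigl|\widehat{1_{(X-i)\cap V}}(r(i))\bigr|>\epsilon$; put $R=\{r(i):i\text{ bad}\}\subseteq V^{\ast}\setminus\{0\}$. Summing $|\widehat{1_{(X-i)\cap V}}(r(i))|^2>\epsilon^2$ over the bad $i$ and grouping by the value of $r(i)$ gives $\epsilon^3N\le\sum_{r\in R}\sum_{i\in\mathbb{F}_2^n}|\widehat{1_{(X-i)\cap V}}(r)|^2=N\sum_{r\in R}m_r$, so $\sum_{r\in R}m_r\ge\epsilon^3$. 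On the other hand the Cauchy--Schwarz consequence above shows $m_r>\epsilon^2/|W|=\epsilon^2/2^{d}$ for every $r\in R$, and since $\sum_r m_r=\|1_X\|_2^2\le1$ this forces $|R|\le 2^{d}/\epsilon^2$. Now choose $V'$ with $V'^{\perp}=\langle W,R\rangle$ (lifting $R$ arbitrarily into $\mathbb{F}_2^n$); then $V'\leqslant V$, the codimension grows by at most $|R|\le 2^d/\epsilon^2$, and since the cosets $r+W$ with $r\in R$ are pairwise disjoint, disjoint from $W$, and contained in $V'^{\perp}$, we get $\mathcal{E}(V')-\mathcal{E}(V)\ge\sum_{r\in R}m_r\ge\epsilon^3$.

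To finish, I would iterate: start from $V=\mathbb{F}_2^n$ (codimension $0$) and apply the increment step as long as $X$ fails to be $\epsilon$-regular with respect to the current $V$. Each application raises $\mathcal{E}$ by at least $\epsilon^3$ while $\mathcal{E}\le1$ throughout, so the process halts after at most $\lfloor\epsilon^{-3}\rfloor$ steps, and when it halts $X$ is $\epsilon$-regular with respect to the final subspace. Writing $d_t$ for the codimension after $t$ steps, we have $d_0=0$ and $d_{t+1}\le d_t+2^{d_t}/\epsilon^2$, so after at most $\lfloor\epsilon^{-3}\rfloor$ steps $d_t$ is bounded by a tower of twos of height $O(\epsilon^{-3})$; in particular $d_t=O_{\epsilon}(1)$, which is all that is claimed.

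The only step I expect to require genuine care is the bound $|R|\le 2^{d}/\epsilon^2$ on the number of witness frequencies: a priori $|R|$ could be as large as $|V^{\ast}|$, and it is precisely the Cauchy--Schwarz lower bound on each $m_r$, together with the fact that the total Fourier mass is at most $1$, that keeps $R$ bounded --- at the price of a codimension increment depending on the current codimension, which is why the final bound, though depending on $\epsilon$ only, is of tower type (as is unavoidable for regularity lemmas of this kind). The remaining ingredients --- the displayed Fourier identity, the two orthogonality/Cauchy--Schwarz consequences, and the termination count --- are routine.
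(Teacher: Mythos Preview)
The paper does not actually prove this theorem: it is quoted verbatim as Theorem~2.1 from Green's paper \cite{greenregular} and used as a black box, so there is no ``paper's own proof'' to compare against.

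That said, your argument is correct and is essentially the energy-increment proof given by Green in \cite{greenregular}. The Fourier identity you record, its two consequences (the $L^2$ averaging $\sum_i|\widehat{1_{(X-i)\cap V}}(r)|^2=N m_r$ and the Cauchy--Schwarz bound $|\widehat{1_{(X-i)\cap V}}(r)|^2\le |W|m_r$), the resulting energy increment $\sum_{r\in R}m_r\ge\epsilon^3$, the control $|R|\le 2^d/\epsilon^2$, and the termination after at most $\epsilon^{-3}$ steps are all standard and all check out. The only cosmetic point is that the final codimension bound you obtain is tower-type in $\epsilon^{-1}$, whereas the paper only needs and only states $O_\epsilon(1)$; you already note this, and it matches what Green proves.
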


This result is the main ingredient in the proof of the following proposition which is all we will ultimately need.

\begin{proposition}\label{P:regularityapplied}
 Let $X\subset\mathbb{F}_2^n$, $\epsilon>0$ and $|X|\geq \epsilon N$. There is a subspace $V\leqslant\mathbb{F}_2^n$ of codimension $O_{\epsilon}(1)$ and $I\subset \mathbb{F}_2^n/V$ such that $|X\cap\bigcup_{i\in I}(i+V)|\geq (1-\epsilon)|X|$ and $|(X\cap (i+V))+(X\cap (i'+V))|\geq (1-\epsilon)|V|$ for all $i,i'\in I$.
\end{proposition}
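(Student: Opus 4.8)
The plan is to apply the regularity lemma (Theorem~\ref{T:regularity}) to $X$ with a parameter $\delta$ to be chosen small in terms of $\epsilon$, obtaining a subspace $V\leqslant\mathbb{F}_2^n$ of codimension $O_\delta(1)$ with respect to which $X$ is $\delta$-regular. Write $K=2^{\operatorname{codim} V}$, so $|V|=N/K$ and $K=O_\delta(1)$. Let $I_0\subset\mathbb{F}_2^n/V$ be the set of cosets $i+V$ that are $\delta$-regular values for $X$; by $\delta$-regularity, the union of the non-regular cosets contains at most $\delta N$ elements of $X$, so $|X\cap\bigcup_{i\in I_0}(i+V)|\geq(1-\delta)|X|\geq(1-\delta)\epsilon N$. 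The idea is now to keep only those regular cosets that carry a non-negligible share of $X$: set
$$
I=\Bigl\{\,i\in I_0:\ |X\cap(i+V)|\geq \tfrac{\epsilon}{2K}\,|V|\,\Bigr\}.
$$
The cosets of $I_0\setminus I$ together contribute at most $K\cdot\frac{\epsilon}{2K}|V|=\frac{\epsilon}{2}|V|\cdot\frac{|V|\cdot(\text{number of cosets})}{\cdots}$—more carefully, at most $\frac{\epsilon}{2K}|V|$ per coset and there are at most $K$ cosets, so in total at most $\frac{\epsilon}{2}|V|\cdot\frac{N/|V|}{?}$; cleanly: $\sum_{i\in I_0\setminus I}|X\cap(i+V)|\le K\cdot\frac{\epsilon}{2K}|V|=\frac{\epsilon}{2}|V|\le\frac{\epsilon}{2}N\le\frac12|X|$ is too weak, so instead bound it by $\frac{\epsilon}{2K}|V|$ times the number of such cosets $\le K$, giving $\frac{\epsilon}{2}|V|$; since $|V|=N/K$ and this must be compared with $|X|\ge\epsilon N$, we get $\frac{\epsilon}{2}|V|=\frac{\epsilon N}{2K}\le\frac{1}{2K}|X|\le\frac12|X|$. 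To make the loss genuinely small, one simply picks the threshold in $I$ to be $\frac{\epsilon\delta}{2K}|V|$ instead, so the discarded mass is at most $\frac{\epsilon\delta}{2}|V|=\frac{\epsilon\delta N}{2K}\le\frac{\delta}{2}|X|$, and combined with the $\delta N\le\delta|X|/\epsilon$ lost to non-regular cosets we get $|X\cap\bigcup_{i\in I}(i+V)|\ge(1-\epsilon)|X|$ after relabelling $\delta$. This handles the first conclusion; I will present it with a single clean choice of constants.

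For the second conclusion, fix $i,i'\in I$ and write $f=1_{(X-i)\cap V}$ and $g=1_{(X-i')\cap V}$, regarded as functions on $V$; then $X\cap(i+V)=i+\operatorname{supp}f$ and $X\cap(i'+V)=i'+\operatorname{supp}g$, so $(X\cap(i+V))+(X\cap(i'+V))$ is the translate by $i+i'$ of $\operatorname{supp}f+\operatorname{supp}g\subset V$. I want to show $\operatorname{supp}f+\operatorname{supp}g=V$ (or at least misses at most $\epsilon|V|$ points). The standard Fourier argument: for $v\in V$, the number of representations $v=a+b$ with $a\in\operatorname{supp}f$, $b\in\operatorname{supp}g$ is
$$
(f*g)(v)=|V|\sum_{r\in V^\ast}\widehat f(r)\widehat g(r)(-1)^{\langle v,r\rangle}
=|V|\,\widehat f(0)\widehat g(0)+|V|\sum_{r\ne0}\widehat f(r)\widehat g(r)(-1)^{\langle v,r\rangle},
$$
where $\widehat f(0)=|X\cap(i+V)|/|V|\ge\frac{\epsilon\delta}{2K}$ and likewise for $g$, so the main term is at least $\bigl(\frac{\epsilon\delta}{2K}\bigr)^2|V|$, a positive constant times $|V|$. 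The error term is bounded in absolute value by $|V|\sup_{r\ne0}|\widehat f(r)|\cdot\sum_{r\ne 0}|\widehat g(r)|$; since $i,i'$ are $\delta$-regular values, $\sup_{r\ne0}|\widehat f(r)|\le\delta$, and by Cauchy--Schwarz and Parseval $\sum_{r}|\widehat g(r)|\le|V|^{1/2}\bigl(\sum_r|\widehat g(r)|^2\bigr)^{1/2}=|V|^{1/2}\cdot\bigl(\mathbb{E}|g|^2\bigr)^{1/2}=|V|^{1/2}\widehat g(0)^{1/2}\le|V|^{1/2}$. Hence the error is at most $\delta|V|^{3/2}$, which is $o$ of the main term $\sim|V|$ as $|V|\to\infty$ — but $|V|=N/K\to\infty$ with $N$, so for $n\ge n_0(\epsilon)$ this is dominated by the main term, and $(f*g)(v)>0$ for every $v\in V$, i.e.\ $\operatorname{supp}f+\operatorname{supp}g=V$. (If one prefers not to assume $|V|$ large, the Cauchy--Schwarz bound on $\sum_{r\ne 0}|\widehat g(r)|$ is slightly wasteful; sharpening it to $\sum_{r\ne 0}|\widehat g(r)|\le\bigl(\sum_{r\ne0}1\bigr)^{1/2}\bigl(\sum_{r\ne0}|\widehat g(r)|^2\bigr)^{1/2}\le|V|^{1/2}\widehat g(0)^{1/2}$ and choosing $\delta$ smaller than $(\epsilon\delta/2K)^2$ — i.e.\ a separate small parameter — gives the bound $|(X\cap(i+V))+(X\cap(i'+V))|\ge(1-\epsilon)|V|$ directly, which is all that is claimed; I will run it this way to avoid any appeal to $|V|\to\infty$.)

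The main obstacle, and the only genuinely delicate point, is the interplay of the three small parameters: the regularity parameter $\delta$ fed to Theorem~\ref{T:regularity}, the coset-mass threshold, and the final $\epsilon$. One has to choose $\delta$ small enough (roughly $\delta\ll\epsilon^3$, since $\widehat f(0)\widehat g(0)\gtrsim(\epsilon\delta)^2$ must beat $\delta$ times a Parseval factor) so that both conclusions hold simultaneously after the discarded cosets are accounted for. Everything else is a routine Fourier/counting computation. A secondary point to be careful about is that Theorem~\ref{T:regularity} only produces $V$ of codimension $O_\delta(1)$, so $K$ is a constant depending on $\epsilon$; this is harmless since the statement only asks for codimension $O_\epsilon(1)$, but it does mean the threshold $\frac{\epsilon\delta}{2K}$ and hence the main term $\bigl(\frac{\epsilon\delta}{2K}\bigr)^2$ depend on this constant — which is fine as $\delta$ can be chosen after $K$ is known, or one just notes $K=K(\delta)$ and absorbs it.
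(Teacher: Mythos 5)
Your first step---apply the regularity lemma and restrict attention to cosets that are both regular and carry a nontrivial share of $X$---is the same as the paper's, and is fine once the constants are cleaned up. The gap is in the second step, and it is genuine: you try to prove the \emph{pointwise} statement $\operatorname{supp}f+\operatorname{supp}g=V$ by showing $(f*g)(v)>0$ for every $v$, but the error bound you derive is $\delta|V|^{3/2}$ while the main term $|V|\,\widehat f(0)\widehat g(0)$ is at most $|V|$. You write that the error is ``$o$ of the main term $\sim|V|$ as $|V|\to\infty$,'' but the ratio is $\delta|V|^{1/2}\cdot(\widehat f(0)\widehat g(0))^{-1}\to\infty$: the error term \emph{dominates}, not the other way around, and no choice of $\delta$ or of the coset-mass threshold can repair this, because $\delta$ is fixed while $|V|=N/K\to\infty$. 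The parenthetical fix is not a fix: the proposed ``sharpening'' of $\sum_{r\neq0}|\widehat g(r)|$ is the identical bound you already had, and ``choose $\delta$ smaller than $(\epsilon\delta/2K)^2$'' is circular as written and, read charitably as a fresh parameter, still leaves the divergent $|V|^{1/2}$ factor.

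The underlying issue is that regularity controls Fourier coefficients, which only gives an \emph{$L^2$} statement about the fluctuations of $f*g$, not an $L^\infty$ one; individual values $(f*g)(v)$ can genuinely vanish even when $i,i'$ are regular, so $E+F=V$ is simply false in general, and one must settle for $|E+F|\geq(1-\epsilon)|V|$. To get that, you need to sum against a third test function rather than argue pointwise. The paper sets $S=V\setminus(E+F)$, writes $0=\mathbb{E}_{v}(1_E*1_F)(v)1_S(v)=|E||F||S|/|V|^3+\sum_{r\neq0}\widehat{1_E}(r)\widehat{1_F}(r)\widehat{1_S}(r)$, pulls out only $\epsilon^{5/2}$ from each of $\widehat{1_E},\widehat{1_F}$ so that $|\widehat{1_E}(r)\widehat{1_F}(r)|\leq\epsilon^5|\widehat{1_E}(r)|^{1/2}|\widehat{1_F}(r)|^{1/2}$, and then applies H\"older with exponents $(4,4,2)$ plus Parseval. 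This keeps all the norms at the Parseval scale and produces $|S|\leq\epsilon^{10}|V|^4|E|^{-3/2}|F|^{-3/2}$ with no stray $|V|^{1/2}$. (An equivalent repair of your computation: bound $\sum_v\bigl((f*g)(v)-\widehat f(0)\widehat g(0)\bigr)^2=|V|\sum_{r\neq0}|\widehat f(r)|^2|\widehat g(r)|^2\leq\delta^2|E|$ by Parseval, and then note that each $v\notin E+F$ contributes $(\widehat f(0)\widehat g(0))^2$ to the left side; this is Chebyshev in $L^2$ and avoids the pointwise pitfall entirely.) As it stands your write-up does not establish the second conclusion of the proposition.
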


\begin{proof}
 It is enough to prove the claim only for small enough $\epsilon$. Apply the regularity lemma with parameter $\epsilon^5$. We get at most $\epsilon^5N$ elements which are not $\epsilon^5$-regular values for $X$. By the comment above, this is equivalent to stating that at most $\epsilon^5|\mathbb{F}_2^n/V|$ cosets of $V$ are not $\epsilon^5$-regular. Let $I$ be the set of all translates $i+V$ of $V$ such that $|X\cap (i+V)|\geq \epsilon^3 |V|$ and $i$ is $\epsilon^5$-regular value for $X$ with respect to $V$. It is easy to see that $|X\cap\bigcup_{i\in I}(i+V)|\geq |X|-\epsilon^3N-\epsilon^5|\mathbb{F}_2^n/V||V|\geq (1-\epsilon)|X|$. 
 
 Fix $i,i'\in I$ and let $E=(X-i)\cap V$ and $F=(X-i')\cap V$. To prove the remaining property, it is enough to prove that $|E+F|\geq (1-\epsilon)|V|$.
 
Let $S=V\setminus(E+F)$. By Plancherel's formula
$$0=\mathbb{E}_{v\in V}1_E\ast 1_F(v)1_S(v) = |E||F||S|/|V|^3 + \sum_{r\in V^{\ast}\setminus\{0\}}\widehat{1_E}(r)\widehat{1_F}(r)\widehat{1_S}(r),$$
and hence
\begin{align*}
|E||F||S|/|V|^3 &\leq \epsilon^5\sum_{r\in V^{\ast}\setminus\{0\}}|\widehat{1_E}(r)|^{1/2}|\widehat{1_F}(r)|^{1/2}|\widehat{1_S}(r)|\\
&\leq \epsilon^5 \left(\sum_{r\in V^{\ast}}|\widehat{1_E}(r)|^2\right)^{1/4} \left(\sum_{r\in V^{\ast}}|\widehat{1_F}(r)|^2\right)^{1/4} \left(\sum_{r\in V^{\ast}}|\widehat{1_S}(r)|^2\right)^{1/2}\\
&\leq \epsilon^5 |E|^{1/4} |F|^{1/4} |S|^{1/2}/|V|.\\
\end{align*}
The conclusion is that $|S| \leq \epsilon^{10} |V|^4 |E|^{-3/2} |F|^{-3/2}$, which together with hypothesis on the sizes of sets $E$ and $F$ gives us what we want.
\end{proof}

The second major ingredient we will need in the proof of Theorem~\ref{T:familyF} is the following classical theorem by Kneser \cite{kneser}.

\begin{theorem}[Kneser]
 Let $G$ be an abelian group. For any set $S\subset G$ define $\mathrm{Sym}(S)=\{g\in G\colon g+S=S\}$. Then
 $$|A+B|\geq |A|+|B|-|\mathrm{Sym}(A+B)|.$$
\end{theorem}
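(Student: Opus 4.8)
The plan is to prove Kneser's theorem by the \emph{Dyson ($e$-)transform} together with a minimal-counterexample argument, after first reducing to the case where the sumset has trivial stabiliser. Write $H=\mathrm{Sym}(A+B)$. Since $H$ stabilises $A+B$ we have $A+B=(A+H)+(B+H)$, and replacing $A,B$ by $A+H,B+H$ leaves both $A+B$ and $H$ unchanged while only enlarging $|A|,|B|$; so it suffices to prove the formally stronger inequality $|A+B|\ge|A+H|+|B+H|-|H|$ when $A$ and $B$ are unions of $H$-cosets. Passing to $\bar G=G/H$, the image $\bar A+\bar B$ of $A+B$ has trivial stabiliser, and the desired inequality for $(A,B)$ is exactly $|H|$ times the inequality $|\bar A+\bar B|\ge|\bar A|+|\bar B|-1$. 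Hence everything reduces to the \emph{core case}: if $\mathrm{Sym}(A+B)=\{0\}$ then $|A+B|\ge|A|+|B|-1$.

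For the core case I would argue by contradiction, taking a counterexample $(G,A,B)$ minimising $|A|+|B|$ and then $|B|$; rerunning the reduction above on a minimal counterexample shows that it still has trivial stabiliser and satisfies $|A+B|\le|A|+|B|-2$, and since the bound is an equality when $|B|=1$ (or $|A|=1$) we have $|A|,|B|\ge2$. The tool is the $e$-transform: for $e\in G$ put $A'=A\cup(B+e)$ and $B'=B\cap(A-e)$; then $A'+B'\subseteq A+B$, $|A'|+|B'|=|A|+|B|$, and $A',B'$ are nonempty whenever $e\in A-B$. The first real step is to exhibit a choice of $e$ for which $B'$ is a \emph{proper} nonempty subset of $B$; if no such $e$ existed, then $B+e\subseteq A$ for every $e\in A-B$, which forces $B-B\subseteq\mathrm{Sym}(A)$, hence makes $A+B$ a translate of $A$ with $\mathrm{Sym}(A+B)=\mathrm{Sym}(A)\supseteq B-b$ nontrivial, contradicting the core hypothesis.

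Fixing such an $e$, we may iterate the transform: at each step $|B|$ decreases strictly while $|A|+|B|$ and the containment of the sumset in $A+B$ are preserved, so by the minimality of the counterexample every intermediate pair $(A_i,B_i)$ itself satisfies Kneser. The iteration terminates either at a pair with $|B_k|=1$, in which case $A_k+B_k$ is a translate of $A_k$ and $|A+B|\ge|A_k+B_k|=|A_k|=|A|+|B|-1$, contradicting $|A+B|\le|A|+|B|-2$; or at a pair admitting no valid $e$, which by the computation in the previous paragraph forces $A_k+B_k$ to be a translate of $A_k$ with $\mathrm{Sym}(A_k)$ nontrivial, and one extracts the contradiction by tracking how many cosets of this stabiliser $A+B$ must meet.

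The main obstacle — and the step I expect to be the hardest to write carefully — is the bookkeeping of the stabiliser through the transform. Applying Kneser inductively to a transformed pair $(A',B')$ only yields $|A+B|\ge|A'+B'|\ge|A'|+|B'|-|\mathrm{Sym}(A'+B')|=|A|+|B|-|\mathrm{Sym}(A'+B')|$, which is the bound we want only when $\mathrm{Sym}(A'+B')$ is trivial, yet a single transform can in principle create a nontrivial stabiliser. The observation that controls this is that $A'+B'=A+B$ would force $\mathrm{Sym}(A'+B')=\mathrm{Sym}(A+B)=\{0\}$, so in the problematic situation one always has a strict shrinkage $A'+B'\subsetneq A+B$; quantifying that shrinkage against the minimality of the counterexample (and, in the terminal case, against the periodicity that is forced on $A+B$) is precisely where the substance of the argument lies.
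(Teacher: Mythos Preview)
The paper does not prove Kneser's theorem; it quotes the statement and refers the reader to Tao--Vu for a proof, so there is no paper-proof to compare against.

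Your outline --- reduce to the trivial-stabiliser (``core'') case by quotienting out $H=\mathrm{Sym}(A+B)$, then attack the core case with the Dyson $e$-transform and a minimal counterexample --- is one of the standard strategies, and your reduction to the core case is correct. But the core-case argument has a real gap at exactly the point you flag. After one transform the pair $(A',B')$ may acquire a nontrivial stabiliser $H'=\mathrm{Sym}(A'+B')$; your minimality hypothesis is formulated only for pairs with trivial stabiliser, so the best you can extract (by passing to $G/H'$ and invoking minimality on the quotient pair, which does have smaller lex-order) is
\[
|A'+B'|\;\ge\;|A'+H'|+|B'+H'|-|H'|\;\ge\;|A|+|B|-|H'|.
\]
Your strict-shrinkage observation $A'+B'\subsetneq A+B$ then yields $|A+B|\ge|A|+|B|-|H'|+1$, which is weaker than the target $|A|+|B|-1$ as soon as $|H'|\ge 3$. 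Mere strict containment guarantees one extra element, not the $|H'|-1$ you would need, and nothing in your setup forces $|(A+B)\setminus(A'+B')|\ge|H'|-1$ (all you know is that $A+B$ meets some $H'$-coset without containing it). The same issue recurs at the ``no valid $e$'' terminal step: that argument produces a contradiction with $\mathrm{Sym}(A_k+B_k)=\{0\}$, but after even one transform you no longer have that hypothesis for $(A_k,B_k)$. The complete proofs (e.g.\ the one in Tao--Vu, or Kneser's original) avoid this by \emph{not} reducing to the core case first: they carry the stabiliser term through the whole induction and exploit the inclusion $A\subseteq A'$ --- not just $A'+B'\subseteq A+B$ --- to compare $\mathrm{Sym}(A'+B')$ with $\mathrm{Sym}(A+B)$ directly, which is the missing ingredient in your sketch.
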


Readers interested in the proof could consult the book by Tao and Vu \cite{taovu}. We use this theorem to derive the following simple consequence.

\begin{proposition}\label{P:sandwich}
Let $A,B\subset\mathbb{F}_2^n$ and suppose that $|B|>m$, where $m$ is some power of $2$. Then
$$|A+B|\geq \min(|A|+m,2m).$$
\end{proposition}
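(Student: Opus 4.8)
The plan is to invoke Kneser's theorem and then exploit the special feature of $\mathbb{F}_2^n$ that every subgroup has order a power of $2$. Write $H=\mathrm{Sym}(A+B)$. This is a subgroup of $\mathbb{F}_2^n$, so $|H|=2^t$ for some $t\geq 0$, and by definition $A+B$ is a union of cosets of $H$, so $|H|$ divides $|A+B|$. We may assume $A\neq\emptyset$ (the bound is genuinely about the nonempty case), and then $A+B\neq\emptyset$ since $|B|>m\geq 1$.

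First I would dispose of the easy case $|H|>m$, which for powers of $2$ means $|H|\geq 2m$. Since $A+B$ is a nonempty union of cosets of $H$, we get $|A+B|\geq|H|\geq 2m\geq\min(|A|+m,2m)$, and we are done.

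Now suppose $|H|\leq m$, so in particular $|H|$ divides $m$. I would pass to the quotient $\bar G=\mathbb{F}_2^n/H$ with projection $\pi$, and first observe that $\mathrm{Sym}_{\bar G}(\pi(A)+\pi(B))$ is trivial: since $\pi(A)+\pi(B)=\pi(A+B)$ and $A+B$ is $H$-invariant, a lift of any symmetry of $\pi(A+B)$ would be a symmetry of $A+B$ and hence lie in $H$. Applying Kneser's theorem in $\bar G$ to $\pi(A)$ and $\pi(B)$ therefore gives $|\pi(A)+\pi(B)|\geq|\pi(A)|+|\pi(B)|-1$. Multiplying by $|H|$ and using $|A+B|=|H|\,|\pi(A)+\pi(B)|$, $|H|\,|\pi(A)|\geq|A|$, and $|H|\,|\pi(B)|=|B+H|$, this yields $|A+B|\geq|A|+|B+H|-|H|$. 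Finally, $|B+H|$ is a multiple of $|H|$ that is at least $|B|>m$; since $m$ is itself a multiple of $|H|$, this forces $|B+H|\geq m+|H|$, and hence $|A+B|\geq|A|+m\geq\min(|A|+m,2m)$.

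The only point requiring care is the very last numerical step, namely upgrading $|B+H|>m$ to $|B+H|\geq m+|H|$, which is exactly where the hypothesis that $m$ is a power of $2$ (hence a multiple of the subgroup order $|H|\leq m$) is essential; a raw application of Kneser's theorem would only give $|A+B|\geq|A|+|B|-|H|$, which is too weak when $|H|$ is large compared to $|B|-m$. Everything else is bookkeeping with the coset structure of $A+B$.
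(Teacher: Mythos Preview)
Your proof is correct and follows essentially the same route as the paper: split on whether $|H|$ exceeds $m$, and in the small-$H$ case pass to $\mathbb{F}_2^n/H$, apply Kneser there, and use that $m$ is a multiple of $|H|$ to upgrade $|\pi(B)|>m/|H|$ to $|\pi(B)|\geq m/|H|+1$ (equivalently your $|B+H|\geq m+|H|$). The only difference is cosmetic: you spell out why $\mathrm{Sym}_{\bar G}(\pi(A)+\pi(B))$ is trivial, which the paper leaves implicit when it writes $|\pi(A)+\pi(B)|\geq|\pi(A)|+|\pi(B)|-1$.
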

\begin{proof}
Let $W=\mathrm{Sym}(A+B)$. Obviously, $W$ is a subgroup, hence $|W|$ is a power of $2$. Notice that for any nonempty set $S$ we must have $|S|\geq |\mathrm{Sym}(S)|$. Indeed, for fixed $s\in S$ and $g\in\mathrm{Sym}(S)$, we must have that $t=g+s\in S$. However, $s$ and $t$ uniquely determine $g$, and since there are $|S|$ options for $t$, we are done. Thus, if $|W|\geq 2m$, then the proof is finished.

Suppose now that $|W|\leq m$ and let $\pi\colon \mathbb{F}_2^n\to \mathbb{F}_2^n/W$ be the canonical projection. Notice that $|\pi(A)|\geq |A|/|W|$ and $|\pi(B)|\geq m/|W|+1$. Kneser's theorem now gives us that $|\pi(A)+\pi(B)|\geq |\pi(A)|+|\pi(B)|-1$. It is easy to see that $A+B$ is equal to some union of cosets of $W$ and hence $|A+B|=|\pi(A+B)||W| = |\pi(A)+\pi(B)||W|$. Using this and the previous three inequalities we get what we wanted.
\end{proof}

We proceed to the proof of the main theorem in this section.

\begin{proof}[Proof of Theorem~\ref{T:familyF}]
In this proof we will use Proposition~\ref{P:regularityapplied} and hence will also use the notation introduced there. Obviously, it is enough to prove the claim only for small enough $\epsilon$.

Let $\eta=\epsilon^7$, and $d$ be the maximal codimension given by the regularity lemma when applied with parameter $\eta$. We will say that a set is \emph{almost coset of $V$} if it can be obtained by removing at most $\epsilon^3|V|$ elements from a coset of $V$. Our family $\mathcal{F}$ will consist of all subsets of $\mathbb{F}_2^n$ with at least $(2-\epsilon)k'$ elements that can be obtained as a union of almost cosets of some subspace $V$ of codimension at most $d$. Family $\mathcal{F}$ by its definition satisfies property (ii) from the statement of the theorem. Let's prove that it also satisfies property (i), that is $|\mathcal{F}|\leq 2^{\epsilon N}$ (for $n$ large enough).

Fix codimension $d'\leq d$ and let $D'=2^{d'}$. There are at most $N^{d'}$ ways to choose subspace $V$ of codimension $d'$, and $2^{D'}$ ways to choose some of its cosets from which we will remove some elements to obtain a set in $\mathcal{F}$. So the contribution from these two factors is bounded by $2^{o(N)}$. Once we have chosen cosets, we need to bound the number of ways to remove at most $\epsilon^3 N/D'$ elements from each of them. We bound this by
$$\left(\sum_{s=0}^{\epsilon^3N/D'}{N/D'\choose s}\right)^{D'}\leq  N^{D'} {N/D'\choose \epsilon^3N/D'}^{D'}\leq N^{D'} \left(e/\epsilon^3\right)^{\epsilon^3N}\leq 2^{o(N)}2^{\epsilon^2N}.$$
Multiplying these bounds, and taking the union over all the possible choices for $d'$ gives the required bound.

Let's prove now the remaining (and most important) property (iii). Take any $X\subset \mathbb{F}_2^n$, $|X|=k$, and apply Proposition~\ref{P:regularityapplied} with parameter $\eta$. Let $X_I=X\cap\bigcup_{i\in I}(i+V)$. 

Let $m$ be the largest power of $2$ such that $|I|>m/2$. Then obviously $|I|\leq m$ and by Proposition~\ref{P:sandwich} also $m\leq |I+I|$.

Let $\delta$ be the density of $X$ on the union of translates from $I$. Hence
$$(1-\eta)k'<(1-\eta)|X|\leq |X_I|=\delta|I||V| \leq \delta m|V|.$$
Notice that if $|X_I+X_I|\geq (2-\epsilon)k'$, then $X_I+X_I\in \mathcal{F}$ and we are done. If not, then
$$(2-\epsilon)k' > |X_I+X_I|\geq (1-\eta)|I+I||V|\geq (1-\eta)m|V|.$$
So
$$\frac{1-\eta}{\delta}< \frac{m|V|}{k'}< \frac{2-\epsilon}{1-\eta}<2.$$
However, since the second term is a power of $2$, we conclude that $\delta \geq 1-\eta$. 

Let $J$ be the set of all those $j\in I$ for which $|X\cap (j+V)|\geq (1-\sqrt{\eta})|V|$. By bounding the size of $X\cap\bigcup_{i\in I}(i+V)$ from below and from above we get
$$(1-\eta)|I||V| \leq |J||V| + (1-\sqrt{\eta})(|I|-|J|)|V|,$$
and hence $|J|\geq (1-\sqrt{\eta})|I|$.

If we set $X_J=X\cap\bigcup_{j\in J}(j+V)$, then
$$|X_J|\geq |X_I| - \sqrt{\eta} |I||V|\geq (1-\eta)|X|-\sqrt{\eta}N\geq(1-\eta-\sqrt{\eta}/\epsilon)|X|>(1-\epsilon)k'.$$
Proposition~\ref{P:sandwich} now shows that $|X_J+X|\geq (2-\epsilon)k'$, so $X_J+X\in \mathcal{F}$ and we are again done.
\end{proof}

By applying Theorem~\ref{T:familyF} for, say, $\epsilon/2$ and removing the element $0$ from all sets in $\mathcal{F}$, we get the following corollary.

\begin{corollary}\label{C:familyF}
 Theorem~\ref{T:familyF} is also true if we replace sumsets with restriced sumsets.
\end{corollary}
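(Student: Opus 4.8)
The plan is to deduce Corollary~\ref{C:familyF} from Theorem~\ref{T:familyF} by a simple black-box manipulation, exploiting the identity $X\widehat{+}X = (X+X)\setminus\{0\}$ that holds in $\mathbb{F}_2^n$ and was recorded in Section~\ref{S:notation}. First I would invoke Theorem~\ref{T:familyF} with parameter $\epsilon/2$ in place of $\epsilon$, obtaining a family $\mathcal{F}_0$ of subsets of $\mathbb{F}_2^n$ satisfying $|\mathcal{F}_0|\le 2^{(\epsilon/2)N}$, every $F\in\mathcal{F}_0$ has at least $(2-\epsilon/2)k'$ elements, and for every $X$ with $k$ elements there is $F\in\mathcal{F}_0$ with $F\subset X+X$. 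Then I would set $\mathcal{F} = \{\,F\setminus\{0\}\colon F\in\mathcal{F}_0\,\}$.

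Next I would verify the three properties for this new family with the original parameter $\epsilon$. Property (i) is immediate since $|\mathcal{F}|\le|\mathcal{F}_0|\le 2^{(\epsilon/2)N}\le 2^{\epsilon N}$. For property (ii), removing the single element $0$ decreases the size by at most $1$, so every $F\setminus\{0\}$ has at least $(2-\epsilon/2)k'-1$ elements; since $k'$ is a power of $2$ and $k\ge\epsilon N$ forces $k'$ (hence $k'$) to be large once $n\ge n_0(\epsilon)$, we have $(2-\epsilon/2)k'-1\ge(2-\epsilon)k'$, giving (ii). For property (iii), given $X$ with $|X|=k$, pick $F\in\mathcal{F}_0$ with $F\subset X+X$; then $F\setminus\{0\}\subset (X+X)\setminus\{0\} = X\widehat{+}X$, and $F\setminus\{0\}\in\mathcal{F}$, which is exactly the restricted-sumset version of (iii).

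There is essentially no obstacle here: the only mild point requiring care is the arithmetic in property (ii), where one must confirm that the slack built in by running Theorem~\ref{T:familyF} at $\epsilon/2$ absorbs both the loss of one element and the replacement of $2-\epsilon/2$ by $2-\epsilon$, which holds for all sufficiently large $n$ (equivalently, sufficiently large $k'$, which is guaranteed by $k\ge\epsilon N$ and $N=2^n\to\infty$). I would state the corollary's proof in two or three sentences along exactly these lines.
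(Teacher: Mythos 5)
Your proposal is correct and matches the paper's own (one-line) proof exactly: apply Theorem~\ref{T:familyF} at parameter $\epsilon/2$ and delete $0$ from each member of the resulting family, using $X\widehat{+}X=(X+X)\setminus\{0\}$. The extra arithmetic you spell out for property (ii) is exactly the slack the paper is implicitly relying on, so nothing is missing.
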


\section{Proof of the upper bound in Theorem \ref{T:main}}\label{S:upper}

We can now easily finish the proof of the upper bound in Theorem~\ref{T:main}. Let $k=\lfloor(1+\epsilon)n\log n\rfloor$ and $\delta>0$ to be specified later. As we said at the beginning of this section, any $X$ such that $|X|=k$ and $|X+X|< 10|X|$ is contained in some subspace of size $O(k)$. Since there are at most $N^{\log O(k)}$ such subspaces, applying Corollary~\ref{C:familyF} with parameter $\delta$ and from \eqref{E:familyF} we have
\begin{align}
P_{\mathrm{small}}(n) &\leq N^{\log O(k)}2^{\delta O(k)}2^{-(2-\delta)k'}\nonumber\\
&\leq N^{\log O(k)}2^{\delta O(k)}2^{-(1-\delta/2)k}\nonumber\\
&\leq 2^{n\log n(1 + \delta O(1)-(1-\delta/2)(1+\epsilon)+o(1))}\label{E:smallfinal}
\end{align}
and this is $o(1)$ if we choose $\delta$ sufficiently small, which, joined with the appropriate result for $P_{\mathrm{large}}(n)$ from Section~\ref{S:large} proves the upper bound.

\section{Proof of the lower bound in Theorem \ref{T:main}}\label{S:lower}

In this section we prove the lower bound stated in Theorem~\ref{T:main}. As we mentioned in the introduction, this is basically already contained in Green's initial paper \cite{green}, so here we just sketch his argument and do a more careful calculation.

Let $m=\lfloor\log n+\log\log n\rfloor$ and let $M$ be the number of $m$-dimensional subspaces $H$ of $\mathbb{F}_2^n$ satisfying $H\setminus \{0\}\subseteq A$. In \cite{green} it was proven that
$$\mathbb{E}M\geq 2^{nm-m^2-2^m}\quad\text{and}\quad\mathrm{Var}(M)\leq 2\sum_{l=1}^m2^{2mn-2^{m+1}+2^l-nl}.$$
Notice that $2^l-nl\leq 2-n$ for every $l=1,\dots, m$. Indeed, the crucial point is when $l=m$; in that case we have
\begin{align}
2^m-n(m-1)-2&\leq 2^{\log n+\log\log n}-n(\log n+\log\log n-2)-2\nonumber\\
&\leq -n\log\log n+2n-2 \leq 0.\label{eq:kn}
\end{align}
Second moment method now gives
$$\mathbb{P}(M=0)\leq \frac{\mathrm{Var}(M)}{(\mathbb{E}M)^2}\leq 8m2^{2m^2-n}=o(1).$$
Notice now that whenever $H\setminus\{0\}\subset A$ there is a clique of size at least $|H|$, since $H\setminus\{0\} = H\widehat{+}H$, so we get the lower bound
\begin{align}
\mathbb{P}(\omega > \textstyle{\frac12} n\log n) \geq\mathbb{P}(\omega\geq 2^{\lfloor\log n+\log\log n\rfloor})=1-o(1).\label{eq:low}
\end{align}

\section{Optimality}\label{S:optimality}

Finally, we prove that the results obtained in the previous two sections are optimal (in terms of constants in front of $n\log n$).

\begin{proof}[Proof of Theorem~\ref{T:optimalmain}]
First of all, let's prove that for every $\epsilon>0$, there is a sequence $(n_i)$ for which $k=\lfloor(\frac12+\epsilon)n\log n\rfloor$ is the upper bound for the clique number. We already know from Section~\ref{S:large} that $P_{\mathrm{large}}(n_i)=o(1)$.

Let $m_i=\left\lfloor \frac{1}{1+\epsilon}2^i\right\rfloor$ and $n_i = 2^{m_i}$. Notice that for such $n_i$ we have 
$$\left(1+\frac{\epsilon}{1+\epsilon}-(\textstyle{\frac12}+\epsilon)2^{1-i}\right)2^{m_i+i-1}-1\leq k\leq \left(1+\frac{\epsilon}{1+\epsilon}\right)2^{m_i+i-1}.$$
We remind the reader that we defined $k'$ as the power of $2$ satisfying $k'< k\leq 2k'$. By the previous two inequalities, we have $k' = 2^{m_i+i-1}$. Moreover, by the second inequality we have $(\frac12+\epsilon)k'/k\geq\frac12+\frac{\epsilon}{2}$. Following the same calculation as in \eqref{E:smallfinal} we now have
\begin{align*}
P_{\mathrm{small}}(n_i) &\leq 2^{n_i\log n_i(1 +\delta O(1)+o(1))-(2-\delta)k'}\leq 2^{n_i\log n_i(1 +\delta O(1)+o(1)-(2-\delta)(\frac12+\epsilon)k'/k)}\\
&\leq 2^{n_i\log n_i(1 +\delta O(1)+o(1)-(2-\delta)(\frac12+\frac{\epsilon}{2}))}
\end{align*}
and this is again $o(1)$ if we choose $\delta$ sufficiently small. Hence
$$\lim_{i\to\infty}\mathbb{P}(\omega\leq (\textstyle{\frac12} +\epsilon) n_i\log n_i)= 1$$
and the lower bound obtained in the previous section is optimal.

To prove that the upper bound from Section~\ref{S:small} is also optimal, first notice that the same calculations we did in Section~\ref{S:lower} would, for some $n$, go through for $m=\lfloor\log n+\log\log n\rfloor +1$. Indeed, it is easy to see that $2^{1-\{\log x\}}-1\lesssim \frac{1}{x}\leq \frac{\log x}{2x}$ for all large enough integers $x$ of the form $x=2^s-1$, so if we set $n_j=2^{2^j-1}$ we get that \eqref{eq:kn} is now
\begin{align*}
2^m-n_j(m-1)-2 &< (2^{1-\{\log\log n_j\}} - 1)n_j\log n_j-n_j\log\log n_j + n_j\\
&\leq \textstyle{\frac{\log\log n_j}{2\log n_j}} \cdot n_j\log n_j-n_j\log\log n_j + n_j = -\textstyle{\frac12}n_j\log\log n_j+n_j <0.
\end{align*}
Proceeding as before, this gives
\begin{equation}\label{E:onepointneoptimalno}
\mathbb{P}(\omega > n_j\log n_j)\geq \mathbb{P}(\omega\geq 2^{\lfloor\log n_j+\log\log n_j\rfloor +1})=1-o(1).
\end{equation}
\end{proof}

\section{One-point concentration of the clique number}\label{S:onepointclique}

In this section we prove that we actually have one-point concentration for most $n$, namely Theorem \ref{T:onepoint}. This will be an easy consequence of the following proposition.

\begin{proposition}\label{P:onepoint}
Let $\epsilon>0$ and $(n_l)$ be a sequence satisfying $\{\log n_l+\log\log n_l\} <1-\epsilon$. Then
$$\lim_{l\to\infty}\mathbb{P}(\omega_{n_l} = 2^{\lfloor \log n_l+\log\log n_l\rfloor})= 1.$$
\end{proposition}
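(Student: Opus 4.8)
The plan is to sandwich $\omega_{n_l}$ between $2^{\lfloor \log n_l + \log\log n_l\rfloor}$ and the next power of $2$ minus one, and to show that the parameter regime $\{\log n_l + \log\log n_l\} < 1-\epsilon$ makes both the lower bound machinery of Section~\ref{S:lower} and the upper bound machinery of Sections~\ref{S:large}--\ref{S:upper} apply with the \emph{same} target value. First I would set $m = \lfloor \log n_l + \log\log n_l\rfloor$ and revisit the second-moment computation from Section~\ref{S:lower}: the only place the fractional part entered was the estimate $2^m - n(m-1) - 2 \le 0$ in \eqref{eq:kn}, which under the weaker hypothesis $\{\log n_l+\log\log n_l\}<1-\epsilon$ becomes $2^m \le 2^{1-\epsilon} n\log n$ and hence $2^m - n(m-1) - 2 \le 2^{1-\epsilon}n\log n - n\log n - n\log\log n + 2n \le -\epsilon' n\log n$ for some $\epsilon'>0$ and all large $l$, so the error terms are still negligible and $\mathbb{P}(M=0)=o(1)$. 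This gives $\mathbb{P}(\omega_{n_l} \ge 2^m) = 1 - o(1)$, exactly as in \eqref{eq:low}.

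Next I would prove the matching upper bound $\mathbb{P}(\omega_{n_l} \le 2^m) = 1-o(1)$, i.e.\ $\mathbb{P}(\omega_{n_l} \ge 2^m + 1) = o(1)$. Set $k = 2^m + 1$. Then $k$ is (just above) a power of $2$, so in the notation of Theorem~\ref{T:familyF} we have $k' = 2^m$ and $k'/k = 2^m/(2^m+1) \to 1$. The bound $P_{\mathrm{large}}(n_l) = o(1)$ for $k \ge \frac12 n\log n$ is already supplied by Section~\ref{S:large}, and since $2^m \ge 2^{-\epsilon} n\log n$ by the hypothesis on the fractional part, this range condition holds for large $l$. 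For $P_{\mathrm{small}}(n_l)$ I would run the computation \eqref{E:smallfinal} verbatim: any $X$ with $|X| = k$ and small doubling sits in a subspace of size $O(k)$, of which there are $N^{\log O(k)} = 2^{o(n\log n)}$, and applying Corollary~\ref{C:familyF} with a small parameter $\delta$ gives
\begin{align*}
P_{\mathrm{small}}(n_l) &\le N^{\log O(k)} 2^{\delta O(k)} 2^{-(2-\delta)k'} \le 2^{n_l\log n_l\left(o(1) + \delta O(1) - (2-\delta)\frac{k'}{k}\cdot\frac{k}{n_l\log n_l}\right)}.
\end{align*}
Since $k/(n_l\log n_l) = (2^m+1)/(n_l\log n_l) \ge 2^{-\epsilon}$ and $k'/k \to 1$, the exponent is at most $n_l\log n_l(o(1) + \delta O(1) - 2^{1-\epsilon} + O(\delta))$, which is negative and bounded away from $0$ once $\epsilon < 1$ is fixed and $\delta$ is chosen small enough depending on $\epsilon$. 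Hence $P_{\mathrm{small}}(n_l) = o(1)$, and combining with $P_{\mathrm{large}}$ and \eqref{E:unionbound} yields $\mathbb{P}(\omega_{n_l} \ge 2^m+1) = o(1)$.

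Putting the two halves together, with probability $1 - o(1)$ we have $2^m \le \omega_{n_l} < 2^m + 1$, forcing $\omega_{n_l} = 2^m = 2^{\lfloor \log n_l + \log\log n_l\rfloor}$, which is the claim. The one genuinely delicate point — and the place I would be most careful — is verifying that the constant $2^{1-\epsilon} > 1$ coming from the fractional-part hypothesis really does give room on \emph{both} sides simultaneously: on the lower side it must beat the $n\log\log n$ and lower-order losses in \eqref{eq:kn}, and on the upper side it must dominate the $\delta O(1)$ slack from the number of subspaces and from Corollary~\ref{C:familyF}. Both are routine once one tracks that $\epsilon$ is fixed before $\delta$ and $l$ are sent to their limits, but it is worth stating the inequalities explicitly rather than hiding them in $o(1)$'s. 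Finally, Theorem~\ref{T:onepoint} follows immediately from this proposition because the set $\{n : \{\log n + \log\log n\} < 1-\epsilon\}$ has lower density tending to $1$ as $\epsilon \to 0$ (the function $n \mapsto \log n + \log\log n$ has derivative tending to $0$, so its fractional parts equidistribute slowly and spend a $1-\epsilon$ proportion of the time below $1-\epsilon$), so a diagonal choice of $\epsilon \to 0$ produces a single sequence of density $1$.
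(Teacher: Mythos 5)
Your overall strategy is exactly the paper's: set $k = 2^m+1$ with $m = \lfloor \log n + \log\log n\rfloor$, invoke Section~\ref{S:large} for $P_{\mathrm{large}}$, apply Corollary~\ref{C:familyF} and the computation \eqref{E:smallfinal} for $P_{\mathrm{small}}$, and use \eqref{eq:low} for the lower bound. However, your execution of the $P_{\mathrm{small}}$ estimate contains an error that hides the entire point of the hypothesis. You assert $N^{\log O(k)} = 2^{o(n\log n)}$, but with $k \asymp n\log n$ we have $\log O(k) = \log n + \log\log n + O(1) = (1+o(1))\log n$, and hence $N^{\log O(k)} = 2^{n\log n(1+o(1))}$, \emph{not} $2^{o(n\log n)}$. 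The correct exponent therefore contains a leading $1+o(1)$ (as in the paper's bound $2^{n_l\log n_l(1+o(1)+\delta O(1)-(2-\delta)2^{-\{\log n_l+\log\log n_l\}})}$), and the whole task is to beat that $1$. With your version the exponent is ``$o(1) - (\text{positive})$'', which is trivially negative for every $n$ — but that cannot be right, since by \eqref{E:onepointneoptimalno} there is a sequence $(n_j)$ along which $\omega_{n_j} \ge 2^m+1$ with high probability. So your computation, as stated, would contradict the paper's optimality result.

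Once the $1+o(1)$ is restored, the hypothesis $\{\log n_l+\log\log n_l\}<1-\epsilon$ becomes essential: it gives $2^{-\{\cdot\}} > 2^{\epsilon-1}$, hence $(2-\delta)2^{-\{\cdot\}} > (2-\delta)2^{\epsilon-1} \to 2^\epsilon > 1$ as $\delta\to 0$, which is exactly enough to overcome the $1+o(1)+\delta O(1)$. Note also that the direction of your intermediate inequality is off: the hypothesis yields a \emph{lower} bound $2^m > 2^{\epsilon-1}n\log n$, not an upper bound $2^m \le 2^{1-\epsilon}n\log n$ (and the constant should be $2^{\epsilon-1}$, not $2^{-\epsilon}$). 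Finally, you suggest the fractional-part hypothesis is also needed for the lower bound, to control the errors in \eqref{eq:kn}; in fact \eqref{eq:kn}, and hence \eqref{eq:low}, holds for every large $n$ without any restriction on $\{\log n+\log\log n\}$, so the paper simply cites \eqref{eq:low} directly. The hypothesis is used only on the upper-bound side.
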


\begin{proof}
Let $\epsilon >0$ and $(n_l)$ be a sequence as in the statement of the proposition. To prove the upper bound, we set $k=2^{\lfloor\log n + \log\log n\rfloor} +1$. As before, by the results of Section \ref{S:large} we already have $P_{\mathrm{large}}(n_l)=o(1)$. On the other hand, after applying Corollary~\ref{C:familyF} with parameter $\delta$ and by the same calculation as in \eqref{E:smallfinal} we get
$$P_{\mathrm{small}}(n_l) \leq 2^{n_l\log n_l(1+o(1)+\delta O(1)-(2-\delta)2^{-\{\log n_l+\log\log n_l\}})}.$$
Choosing $\delta$ small enough gives $P_{\mathrm{small}}(n_l)=o(1)$ and this proves that with high probability the clique number is at most $2^{\lfloor\log n + \log\log n\rfloor}$. Additionally, from \eqref{eq:low} we see that the corresponding lower bound is also true, which finishes the proof.
\end{proof}

We can now move to the proof of Theorem~\ref{T:onepoint}.

\begin{proof}[Proof of Theorem~\ref{T:onepoint}]
 First we prove that for any $\epsilon>0$ there exists a set $T_{\epsilon}$ such that $\underline{d}(T_{\epsilon})>1-\epsilon$ and
 $$\lim_{n\to\infty, n\in T_{\epsilon}}\mathbb{P}(w=2^{\lfloor \log n+\log\log n\rfloor})=1.$$
 To do this, by Proposition~\ref{P:onepoint}, it is enough to prove that for any $\epsilon >0$, the set of integers satisfying $\{\log n+\log\log n\} < 1-\epsilon/24$ has lower density at least $1-\epsilon$.
 
 Notice that within range $2^m\leq n< 2^{m+1}$, the quantity $\log\log n$ varies within an interval of size about $1/m$, so for $m$ large enough having $\{\log n\}\in (\epsilon/12,1-\epsilon/12)$ ensures that $\{\log n+\log\log n\} < 1-\epsilon/24$. However, there are $(2^{1-\epsilon/12}-2^{\epsilon/12})2^m > (1-\epsilon/3)2^m$ such $n$ in this range, and it is easy to see that this implies that the lower density is at least $1-\epsilon$.
 
 Let $S_{\epsilon}=\{n\in T_{\epsilon}\colon \mathbb{P}(w=2^{\lfloor \log n+\log\log n\rfloor})>1-\epsilon\}$. Obviously, $\underline{d}(S_{\epsilon}) \geq \underline{d}(T_{\epsilon})>1-\epsilon$. It is now easy to see that the sequence $\bigcup_nS_{1/n}$ satisfies all the requirements needed in the statement.
\end{proof}

Unfortunately, we cannot omit the condition that $n$ goes over a set of density $1$ and let $n$ go over all the integers. Indeed by \eqref{E:onepointneoptimalno} there exists a sequence $(n_j)$ such that
$$\lim_{j\to\infty} \mathbb{P}(w\geq 2^{\lfloor\log n_j+\log\log n_j\rfloor + 1})=1.$$

\section{One-point concentration of the chromatic number}\label{S:onepointchromatic}

In this final section we prove the one-point concentration result for the chromatic number, that is Theorem~\ref{T:onepointchrom}. As we will see, this is a very easy consequence of the corresponding result for the clique number - Theorem~\ref{T:onepoint} and its proof.

\begin{proof}[Proof of Theorem~\ref{T:onepointchrom}]
	First of all, notice that, by symmetry, all the results in previous sections would also hold if we considered the independence number (that is, the size of the largest independent set of vertices) instead of the clique number. By Theorem~\ref{T:onepoint} the independence number is with high probability equal to $2^{\lfloor \log n+\log\log n\rfloor}$, and hence, since every colour forms an independent set, by the pigeonhole principle the chromatic number is at least $2^{n - \lfloor \log n+\log\log n\rfloor}$. On the other hand, the proof of Theorem~\ref{T:onepoint} reveals that not only that the clique number is as claimed, but also that the largest clique is formed by a subspace $V$ of dimension $\lfloor \log n+\log\log n\rfloor$. Notice that if a subspace forms an independent set, then the same is true for each of its translates, since $(i+V)\widehat{+}(i+V)=V\widehat{+}V$ for any $i\in\mathbb{F}_2^n$. Hence we can define a valid colouring by colouring each translate of $V$ with its own colour, thus using exactly $2^{n - \lfloor \log n+\log\log n\rfloor}$ colours.
\end{proof}

\bibliography{clique}{}
\bibliographystyle{alpha}

\end{document}